\title{A note on L-packets and abelian varieties over local fields}
\date{\today}
\numberwithin{equation}{section}
\newtheorem{theorem}{Theorem}[section]
\newtheorem{lemma}[theorem]{Lemma}
\newtheorem{proposition}[theorem]{Proposition}
\newtheorem{corollary}[theorem]{Corollary}
\theoremstyle{definition}
\newtheorem{example}[theorem]{Example}
\def\charpoly{P}
\def\minpoly{M}
\def\cross{\times}
\DeclareMathOperator{\GL}{GL}
\DeclareMathOperator{\SL}{SL}
\DeclareMathOperator{\Sp}{Sp}
\DeclareMathOperator{\SO}{SO}
\newcommand{\Gm}{\mathbb{G}_{\hskip-1pt\textbf{m}}}
\DeclareMathOperator{\GSp}{GSp}
\DeclareMathOperator{\PGSp}{PGSp}
\DeclareMathOperator{\Spin}{Spin}
\DeclareMathOperator{\GSpin}{GSpin}
\def\ra{\rightarrow}
\DeclareMathOperator{\aut}{Aut}
\def\ff{{\mathbb F}}
\def\calo{{\mathcal O}}
\def\calx{{\mathcal X}}
\DeclareMathOperator{\frob}{Fr}
\newcommand{\ang}[1]{{{\langle #1 \rangle}}}
\newcommand{\rest}[1]{|_{#1}}
\newcommand{\st}[1]{\{#1\}}
\newenvironment{alphabetize}{\begin{enumerate}

}{\end{enumerate}}
\newcommand{\dual}[1]{{\check{#1}}}
\newcommand{\Lgroup}[1]{{\,^L\hskip-1pt{#1}}}
\newcommand{\dualgroup}[1]{{\dual{#1}}}
\DeclareMathOperator\Ind{Ind}
\DeclareMathOperator\Lie{Lie}
\DeclareMathOperator\Gal{Gal}
\DeclareMathOperator\End{End}
\DeclareMathOperator\gal{Gal}
\def\cross{\times}
\def\der{_{\rm der}}
\def\ad{_{\rm ad}}
\def\integ{{\mathbb Z}}
\def\Q{{\mathbb Q}}
\def\rat{\Q}
\def\tensor{{\otimes}}
\def\cpx{\mathbb{C}}
\def\real{\mathbb{R}}
\def\twiddle{\sim}
\newcommand\ceq{{\, := \,}}
\newcommand\tq{{\, \vert \, }}
\newcommand\iso{{\ \cong\ }}
\newcommand{\abs}[1]{{\vert #1 \vert}}
\DeclareMathOperator{\Fr}{Fr}
\DeclareMathOperator{\zendo}{ZEnd^0}
\DeclareMathOperator{\Endo}{End^0}
\newcommand{\labitem}[2]{%
\def\@itemlabel{\textbf{#1}}
\item
\def\@currentlabel{#1}\label{#2}}
\author{Jeffrey D. Achter}
\address{Colorado State University, Fort Collins, CO}
\email{achter@math.colostate.edu}
\urladdr{http://www.math.colostate.edu/~achter}
\thanks{JDA was partially supported by a grant from the Simons Foundation (204164).}
\author{Clifton Cunningham}
\address{University of Calgary, Canada}
\email{cunning@math.ucalgary.ca}
\thanks{CC was partially supported by NSERC (DG696158) and PIMS}
\subjclass[2010]{11G10 (primary), 11S37 (secondary)}
\keywords{abelian varieties, good reduction, local fields, L-packets, admissible representations}
\begin{document}

\begin{abstract}
A polarized abelian variety $(X,\lambda)$ of dimension $g$ and good reduction over a local field $K$ determines an admissible representation of $\GSpin_{2g+1}(K)$.  We show that the restriction of this representation to $\Spin_{2g+1}(K)$ is reducible if and only if $X$ is isogenous to its twist by the quadratic unramified extension of $K$. When $g=1$ and $K = \rat_p$, we recover the well-known fact that the admissible $\GL_2(K)$ representation attached to an elliptic curve $E$ with good reduction is reducible upon restriction to $\SL_2(K)$ if and only if $E$ has supersingular reduction.
\end{abstract}

\maketitle

\section*{Introduction}

Consider an elliptic curve $E/\rat_p$ with good reduction.
Let $\pi_E$ be the unramified principal series representation
of  $\GL_2(\rat_p)$ with the same Euler factor as $E$.
Although $\pi_E$ is irreducible,
the restriction of $\pi_E$ from $\GL_2(\rat_p)$ to its derived group,
$\SL_2(\rat_p)$, need not be irreducible.  In fact,
it is not hard to show that $\pi_E\rest{\SL_2(\rat_p)}$
is reducible if and only if the reduction of $E$ is supersingular,
see \cite{Anand-Prasad:SL2}*{2.1} for example.

This note generalizes the observation above,
as follows.
Let $K$ be a non-Archimedean local field with finite residue field
and let $(X,\lambda)$ be a polarized abelian variety over $K$ of dimension $g$ with good reduction.
Fix a rational prime $\ell$ invertible in the residue field of $K$.
Then the associated Galois representation on the rational $\ell$-adic Tate module of $X$ takes values in
$\GSp(V_\ell X, \ang{\cdot,\cdot}_\lambda) \iso \GSp_{2g}(\rat_\ell)$.
The eigenvalues of the image of Frobenius under this unramified representation determine an irreducible principal series representation $\pi_{X,\lambda}$ of $\GSpin_{2g+1}(K)$
with the same Euler factor as $X$.
Note that the dual group to $\GSpin_{2g+1}$ is $\GSp_{2g}$;
note also that $\GSpin_{3} \iso \GL_2$ and $\GSpin_{5} \iso \GSp_4$, accidentally.
In this note we show that the restriction of $\pi_{X,\lambda}$ from $\GSpin_{2g+1}(K)$
to its derived group $\Spin_{2g+1}(K)$ is reducible if and only if
$X$ is isogenous to its twist by the quadratic unramified extension of $K$.

Furthermore, we identify the Langlands parameter $\phi_{X,\lambda}$ for $\pi_{X,\lambda}$
and then show that the corresponding L-packet $\Pi_{X,\lambda}$ contains
the equivalence class of $\pi_{X,\lambda}$ only.
Then we show that we can detect when $X$ is isogenous to its quadratic unramified twist directly from the local L-packet $\Pi^\text{der}_{X,\lambda}$ determined by
transferring the Langlands parameter $\phi_{X,\lambda}$ to the derived group $\Spin_{2g+1}(K)$ of $\GSpin_{2g+1}(K)$.

\smallskip
\noindent{\bf Acknowledgements.}
We thank V. Vatsal, P. Mezo and U. K. Anandavardhanan for helpful conversations, and the referee for a careful reading and useful suggestions.

\tableofcontents

\section{Abelian varieties}\label{sec:even}

In this section, we collect some useful facts about abelian varieties, especially over finite fields.  Many of the attributes discussed here are isogeny invariants.
We write $X\twiddle Y$ if $X$ and $Y$ are isogenous abelian varieties, and $\Endo(X)$ for the endomorphism algebra $\End(X)\tensor_\integ\rat$ of $X$.

\subsection{Base change of abelian varieties}

Let $X/\ff_q$ be an abelian variety of dimension $g$.
Associated to it are the characteristic polynomial
$\charpoly_{X/\ff_q}(T)$ and minimal polynomial $\minpoly_{X/\ff_q}(T)$ of
Frobenius.  Then $\charpoly_{X/\ff_q}(T) \in \integ[T]$ is monic of degree
$2g$, and $\minpoly_{X/\ff_q}(T)$ is the radical of $\charpoly_{X/\ff_q}(T)$.

The isogeny class of $X$ is completely
determined by $\charpoly_{X/\ff_q}(T)$ \cite{Tate:endff}.  It is thus possible to
detect from $\charpoly_{X/\ff_q}(T)$ whether $X$ is simple, but even easier to
decide if $X$ is isotypic, which is to say, isogenous to the self-product of a simple abelian variety.
Indeed, let $\zendo(X) \subset \Endo(X)$ be the center of the
endomorphism algebra of $X$.  Then
\begin{equation}
\label{eqzend}
\zendo(X) \iso \rat[T]/\left(\minpoly_{X/\ff_q}(T)\right),
\end{equation}
and $X$ is isotypic if and only if $\minpoly_{X/\ff_q}(T)$ is irreducible.
While it is possible for a simple abelian variety to become reducible
after extension of scalars of the base field, isotypicality is
preserved by base extension (see \cite{Oort:2005survey}*{Claim 10.8} for example).

For a monic polynomial $g(T) = \prod_{1 \le j \le N}(T-\tau_j)$ and
a natural number $r$, set
$
g^{(r)}(T) = \prod_{1 \le j \le N}(T-\tau_j^r).
$
It is not hard to check that
\[
\charpoly_{X/\ff_{q^r}}(T) = \charpoly^{(r)}_{X/\ff_q}(T).
\]

\begin{lemma}
\label{lemdimzend}
Suppose $X/\ff_q$ is isotypic, and let $\ff_{q^r}/\ff_q$ be a finite
extension.  Let $Y$ be a simple factor of $X_{\ff_{q^r}}$.
Then there exists some $m|r$ such that
\begin{align*}
\minpoly_{X/\ff_{q}}^{(r)}(T) &= \minpoly_{Y/\ff_{q^r}}(T)^m\\
\intertext{and }
\dim \zendo(X) &= m \dim \zendo(X_{\ff_{q^r}}).
\end{align*}
\end{lemma}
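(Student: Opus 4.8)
The plan is to exploit the dictionary between the factorization type of the minimal polynomial of Frobenius and the structure of the endomorphism algebra, applied on both sides of the base extension $\ff_q\subset\ff_{q^r}$.

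First I would recall that since $X/\ff_q$ is isotypic, $\minpoly_{X/\ff_q}(T)$ is irreducible over $\rat$ by the discussion surrounding \eqref{eqzend}, so its roots form a single $\gal(\bQ/\rat)$-orbit, say $\{\tau_1,\dots,\tau_N\}$ with $N=\dim\zendo(X)$. Now $\minpoly_{X/\ff_q}^{(r)}(T)=\prod_j(T-\tau_j^r)$ is the characteristic polynomial of the $\ff_{q^r}$-Frobenius acting on $\zendo(X)$, hence it is a power of the minimal polynomial of $\pi_X^r$ over $\rat$; write $\minpoly_{X/\ff_q}^{(r)}(T)=h(T)^a$ where $h$ is irreducible and $a\ge 1$. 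I would then identify $h$: the subalgebra $\rat[\pi_X^r]\subseteq\zendo(X)$ is the center of $\Endo(X_{\ff_{q^r}})$ up to isogeny — more precisely, by Tate's theorem \cite{Tate:endff} the isogeny class of $X_{\ff_{q^r}}$ is governed by $\charpoly_{X/\ff_{q^r}}$, whose radical is $\minpoly_{X/\ff_{q^r}}=\minpoly^{(r)}_{X/\ff_q}$ (using the displayed identity $\charpoly_{X/\ff_{q^r}}=\charpoly^{(r)}_{X/\ff_q}$ and taking radicals). Because isotypicality is preserved under base extension (as noted in the excerpt, citing \cite{Oort:2005survey}), $X_{\ff_{q^r}}$ is again isotypic, so $\minpoly_{X/\ff_{q^r}}(T)$ is irreducible; comparing with $h(T)^a$ forces $a=1$ and $h=\minpoly_{X/\ff_{q^r}}=\minpoly_{Y/\ff_{q^r}}$ for $Y$ a simple factor. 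Wait — I need to be careful: $\minpoly^{(r)}_{X/\ff_q}$ as I defined it is a product over the $N$ roots $\tau_j$ of $\minpoly_{X/\ff_q}$, but several $\tau_j$ may have the same $r$-th power, so it is really $\minpoly_{Y/\ff_{q^r}}(T)^m$ for the multiplicity $m$ of each value. This multiplicity is exactly $N/\deg\minpoly_{Y/\ff_{q^r}} = \dim\zendo(X)/\dim\zendo(X_{\ff_{q^r}})$, giving the second displayed identity, and $m\mid r$ because the fibers of $\tau\mapsto\tau^r$ on a single Galois orbit of algebraic numbers differ by $r$-th roots of unity lying in a cyclic group, so the map $\gal$-orbit $\to$ $\gal$-orbit has fibers of size dividing $r$ (this uses that $\tau_i/\tau_j$ is a root of unity whenever $\tau_i^r=\tau_j^r$, together with the fact that all the $\tau_i$ generate the same number field).

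So the steps, in order, are: (1) record that isotypic $\iff$ $\minpoly$ irreducible, and that base change preserves isotypicality, so both $\minpoly_{X/\ff_q}$ and $\minpoly_{X/\ff_{q^r}}$ are irreducible; (2) use $\charpoly_{X/\ff_{q^r}}=\charpoly^{(r)}_{X/\ff_q}$ and pass to radicals to get $\minpoly_{X/\ff_{q^r}}$ as the radical of $\minpoly^{(r)}_{X/\ff_q}$, hence $\minpoly^{(r)}_{X/\ff_q}(T)=\minpoly_{Y/\ff_{q^r}}(T)^m$ with $m$ the common multiplicity; (3) identify $m=\dim\zendo(X)/\dim\zendo(X_{\ff_{q^r}})$ by degree count via \eqref{eqzend} on both fields; (4) show $m\mid r$ by analyzing the fibers of the $r$-th power map on the Galois orbit of Weil numbers.

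I expect step (4) — proving $m\mid r$ — to be the main obstacle, since it is the one genuinely arithmetic input rather than a bookkeeping consequence of Tate's theorem. The cleanest route is: if $\tau,\tau'$ are roots of the irreducible $\minpoly_{X/\ff_q}$ with $\tau^r=(\tau')^r$, then $\zeta:=\tau/\tau'$ is a primitive $d$-th root of unity with $d\mid r$; the set of such $\tau'$ for fixed $\tau$ is a coset under multiplication by $\mu_d\cap(\text{the ratios that actually occur})$, a subgroup of $\mu_r$, hence of size dividing $r$. One must check this size is independent of $\tau$, which follows because $\gal(\bQ/\rat)$ acts transitively on the $\tau$'s and commutes with the $r$-th power map, so it permutes the fibers transitively and they all have the same cardinality $m$; then $m\mid r$. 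The remaining steps are essentially immediate from the results quoted in the excerpt.
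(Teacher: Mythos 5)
Steps (1)--(3) of your plan match the paper's proof and are correct. Step (4), however, has a genuine gap, and the conclusion $m\mid r$ is in fact false in general. The set of ratios $Z_\tau := \{\zeta\in\mu_r : \zeta\tau\ \text{is a conjugate of}\ \tau\}$ is not a subgroup of $\mu_r$: for $\sigma\in\gal(\bQ/\rat(\tau^r))$ the assignment $\sigma\mapsto\sigma(\tau)/\tau$ is only a $1$-cocycle, not a homomorphism, unless $\mu_r\subset\rat(\tau^r)$, so there is no reason for $Z_\tau$ to be closed under multiplication. Concretely, take $q=p^2$ and let $X/\ff_q$ be the simple abelian variety (it exists by Honda--Tate) whose Frobenius eigenvalue is $\tau=p\zeta_9$ for a primitive ninth root of unity $\zeta_9$. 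Then $\minpoly_{X/\ff_q}(T)=T^6+p^3T^3+p^6$ is irreducible of degree $6=\dim\zendo(X)$, and every root has ninth power $p^9\in\rat$. Taking $r=9$ gives $\minpoly^{(9)}_{X/\ff_q}(T)=(T-p^9)^6$ and $\dim\zendo(X_{\ff_{q^9}})=1$, so $m=6$, which does not divide $9$; here $Z_\tau=\{\zeta_9^k : k\in(\integ/9\integ)^\times\}$, which is visibly not a subgroup and has size $6$.

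Your instinct that step (4) is ``the main obstacle'' was exactly right, but the proposed resolution fails and the divisibility $m\mid r$ cannot be rescued. For what it is worth, the paper's own proof contains the same defect: it asserts that because $\tau$ satisfies $S^r-\tau^r$ over $\rat(\tau^r)$, the degree $[\rat(\tau):\rat(\tau^r)]$ divides $r$; but satisfying a degree-$r$ polynomial only gives degree $\le r$, not a divisor of $r$ (the irreducible polynomial $T^6+T^3+1$ divides $T^9-1$ over $\rat$, yet $6\nmid 9$). Fortunately, the lemma is applied in the paper only with $r=2$ (in Proposition~\ref{propevenzend}), where $m\le r=2$ already forces $m\in\{1,2\}$ and hence $m\mid 2$; the two displayed identities, which your steps (1)--(3) do establish, are all that is actually needed there.
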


\begin{proof}
Write $X_{\ff_{q^r}} \sim Y^n$ with $Y$ simple.
Then we have two
different factorizations of $\charpoly_{X/\ff_{q^r}}(T)$:
\begin{align*}
\charpoly_{X/\ff_{q^r}}(T) &= (\minpoly_{X/\ff_q}^{(r)}(T))^d \\
\charpoly_{X/\ff_{q^r}}(T) &= (\minpoly_{Y/\ff_{q^r}}(T))^e.
\end{align*}
Since $\minpoly_{Y/\ff_{q^r}}(T)$ is irreducible (and all polynomials
considered here are monic), there exists some integer $m$ such that
\[
\minpoly_{X/\ff_q}^{(r)}(T) = \minpoly_{Y/\ff_{q^r}}(T)^m.
\]
Note that
\[
m = \frac{\deg \minpoly_{X/\ff_q}(T)}{\deg \minpoly_{Y/\ff_{q^r}}(T)} =
[\zendo(X):\zendo(X_{\ff_{q^r}})].
\]
Let $\tau$ be a root of $\minpoly_{X/\ff_q}(T)$.
Then $\tau^r$ is a root of $\minpoly^{(r)}_{X/\ff_q}(T)$,
and thus of $\minpoly_{Y/\ff_q}(T)$; and the inclusion of fields
$\zendo(X_{\ff_{q^r}}) \subseteq \zendo(X)$
is isomorphic to the inclusion of fields
$\rat(\tau^r) \subseteq \rat(\tau)$, under \eqref{eqzend}.
In particular, $m =
[\rat(\tau):\rat(\tau^r)]$.  Since $\tau$ satisfies the equation
$S^r - \tau^r$ over $\rat(\tau^r)$, its degree over
$\rat(\tau^r)$ divides $r$.
\end{proof}

\subsection{Even abelian varieties}
\label{subsec:even}

Call an abelian variety $X/\ff_q$ {\em even} if its characteristic polynomial is even:
\[
\charpoly_{X/\ff_q}(T) = \charpoly_{X/\ff_q}(-T).
\]

If $X$ is simple, then it admits a unique nontrivial quadratic twist
$X'/\ff_q$.  For an arbitrary $X/\ff_q$, let $X'/\ff_q$ be the
quadratic twist associated to the cocycle
\begin{diagram}
\gal(\ff_q) & \rto & \aut(X) \\
\frob_q & \rmto & [-1],
\end{diagram}
corresponding to a nontrivial quadratic twist of all simple factors of
$X$.

For future use, we record the following elementary observation:
\begin{lemma}
\label{lemeven}
Let $X/\ff_q$ be an abelian variety.  Then $X$ is even if and only if $X$ and $X'$ are isogenous.
\end{lemma}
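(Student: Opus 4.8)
The plan is to relate the characteristic polynomial of Frobenius for $X$ and for its quadratic twist $X'$, and then invoke Tate's theorem that isogeny over $\ff_q$ is detected by the characteristic polynomial of Frobenius. The key observation is that passing from $X$ to the twist $X'$ defined by the cocycle $\frob_q \mapsto [-1]$ has the effect of negating the Frobenius endomorphism, at least on each simple factor. First I would reduce to the case where $X$ is simple (or isotypic), since $X'$ is defined factor-by-factor and both evenness of $\charpoly_{X/\ff_q}(T)$ and the isogeny relation $X \twiddle X'$ can be checked on simple factors; evenness of a product polynomial holds iff each factor, suitably grouped, is even, and $\charpoly_{X/\ff_q} = \prod \charpoly_{X_i/\ff_q}$ for the simple factors $X_i$.

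Next, for $X$ simple I would verify that the Frobenius of $X'$ is $-1$ times the Frobenius of $X$ under the canonical identification of $\Endo(X')$ with $\Endo(X)$ coming from the twisting cocycle. Concretely, $X'$ is obtained from $X$ by the Galois descent datum twisted by $\frob_q \mapsto [-1]$, so the $q$-power Frobenius on $X'$, viewed inside $\End(X_{\bar\ff_q}) = \End(X'_{\bar\ff_q})$, is $[-1]\comp \Fr_{X}$; hence $\charpoly_{X'/\ff_q}(T) = \charpoly_{X/\ff_q}(-T)$ (up to the sign $(-1)^{2g}=1$, so with no sign correction). Granting this, $X \twiddle X'$ over $\ff_q$ is equivalent, by Tate's theorem (\cite{Tate:endff}), to $\charpoly_{X/\ff_q}(T) = \charpoly_{X'/\ff_q}(T) = \charpoly_{X/\ff_q}(-T)$, which is exactly the definition of $X$ being even. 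For general $X$, one applies this to each simple factor and uses that the full characteristic polynomial is the product.

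The main obstacle I expect is the bookkeeping in the simple case: making precise the identification of endomorphism algebras of $X$ and $X'$ over $\ff_q$ via the common base change to $\bar\ff_q$, and checking that under this identification Frobenius really does get multiplied by $[-1]$ rather than by some other automorphism. One has to be a little careful that $[-1]$ is defined over $\ff_q$ and is central, so that it commutes with the relative Frobenius and the twisted descent datum genuinely produces an abelian variety over $\ff_q$ whose arithmetic Frobenius is $-\Fr_X$; and one should note that when $X$ is simple the only nontrivial quadratic twist is this one, so the statement is unambiguous. Once that is pinned down, everything else is an immediate appeal to Tate's theorem and the multiplicativity of characteristic polynomials over isogeny factors, with no further computation needed.
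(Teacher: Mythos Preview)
Your core strategy --- establish $\charpoly_{X'/\ff_q}(T) = \charpoly_{X/\ff_q}(-T)$ and then invoke Tate's theorem --- is exactly what the paper does, and your second paragraph carries it out correctly. The paper phrases the Frobenius negation via the Galois representation on $V_\ell X$ (identifying $V_\ell X$ with $V_\ell X'$ through the canonical isomorphism $X_{\ff_{q^2}} \iso X'_{\ff_{q^2}}$ and observing $\rho_{X'/\ff_q}(\Fr_q) = -\rho_{X/\ff_q}(\Fr_q)$), but this is the same content as your $\Fr_{X'} = [-1]\comp \Fr_X$ inside $\End(X_{\bar\ff_q})$.

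The one thing to flag is your reduction to simple factors, which is both unnecessary and, as stated, not quite right. The claim that ``evenness of $\charpoly_{X/\ff_q}$ and the isogeny relation $X \twiddle X'$ can be checked on simple factors'' is false: if $Y/\ff_q$ is an ordinary elliptic curve then $X = Y \times Y'$ is even and isogenous to its twist, yet neither simple factor is. What \emph{does} reduce to simple factors is the identity $\charpoly_{X'/\ff_q}(T) = \charpoly_{X/\ff_q}(-T)$ itself, by multiplicativity --- but your argument for that identity in the second paragraph already works for arbitrary $X$ with no reduction needed (nothing there uses simplicity; $[-1]$ is central and defined over $\ff_q$ for any $X$). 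So drop the first paragraph entirely: the proof is the second paragraph applied directly to $X$, plus Tate, which is precisely the paper's argument.
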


\begin{proof}
Use the (canonical, given our construction) isomorphism
$X_{\ff_{q^2}} \iso X'_{\ff_{q^2}}$
to identify $V_\ell X$ and $V_\ell X'$.
Then one knows
(see \cite{Serre-Tate:good}*{p.506} for example) that $\rho_{X'/\ff_q}(\Fr_q) = - \rho_{X/\ff_q}(\Fr_q)$, and thus that
\[
\charpoly_{X'/\ff_q}(T) = \charpoly_{X/\ff_q}(-T).
\]
The asserted equivalence now follows from Tate's theorem \cite{Tate:endff}*{Th 1}.
\end{proof}

To a large extent, evenness of $X$ is captured by the behavior of the
center of $\Endo(X)$ upon quadratic base extension.

\begin{lemma}
\label{lemevenzend}
If $X/\ff_q$ is even, then
\[
\dim \zendo(X) = 2\dim \zendo(X_{\ff_{q^2}}).
\]
\end{lemma}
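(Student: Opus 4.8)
The plan is to sidestep the endomorphism algebras and argue entirely with characteristic polynomials. The bridge is the identification of dimensions coming from \eqref{eqzend}: since $\minpoly_{X/\ff_q}(T)$ is monic, $\dim \zendo(X) = \dim_\rat \rat[T]/(\minpoly_{X/\ff_q}(T)) = \deg \minpoly_{X/\ff_q}(T)$, and because $\minpoly_{X/\ff_q}(T)$ is the radical of $\charpoly_{X/\ff_q}(T)$, this degree is exactly the number of distinct roots of $\charpoly_{X/\ff_q}(T)$. The same reasoning over $\ff_{q^2}$, together with the identity $\charpoly_{X/\ff_{q^2}}(T) = \charpoly^{(2)}_{X/\ff_q}(T)$ recorded above, shows that $\dim \zendo(X_{\ff_{q^2}})$ is the number of distinct roots of $\charpoly^{(2)}_{X/\ff_q}(T)$. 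So the statement reduces to a purely combinatorial count in the set of Frobenius eigenvalues of $X$.

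First I would observe that the set $S$ of distinct roots of $\charpoly_{X/\ff_q}(T)$ is stable under the involution $\tau \mapsto -\tau$ --- this is precisely the hypothesis that $X$ is even --- and that this involution has no fixed point on $S$, since every Frobenius eigenvalue of an abelian variety over $\ff_q$ is a Weil $q$-number and hence nonzero. Therefore $S$ is partitioned into, say, $M$ orbits $\{\tau,-\tau\}$, each of size two, so that $\dim \zendo(X) = \# S = 2M$.

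Next I would count the distinct roots of $\charpoly^{(2)}_{X/\ff_q}(T)$, which form the set $\{\tau^2 : \tau \in S\}$. Since $\tau^2 = \sigma^2$ forces $\sigma = \pm\tau$, the squaring map from $S$ onto this set is exactly two-to-one, so the set has $M$ elements; hence $\dim \zendo(X_{\ff_{q^2}}) = M$. Combining this with the previous paragraph gives $\dim \zendo(X) = 2M = 2\dim \zendo(X_{\ff_{q^2}})$, as desired.

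I do not anticipate a genuine obstacle; the argument is short once one is willing to push the computation to the level of eigenvalues. The only points that deserve care are the two bookkeeping steps packaged into the first paragraph --- tracking distinct roots (rather than roots with multiplicity) through the radical, and then through $\charpoly \mapsto \charpoly^{(2)}$ --- and the invocation of the nonvanishing of Frobenius eigenvalues, which is exactly what makes the involution $\tau \mapsto -\tau$ free on $S$. One could instead try to reduce to the isotypic case and apply Lemma \ref{lemdimzend}, but the twist-pairing among the simple factors of an even abelian variety (non-isogenous simple factors over $\ff_q$ can become isogenous over $\ff_{q^2}$) makes that route fiddlier than the direct count, so I would not take it.
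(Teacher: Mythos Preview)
Your proposal is correct and follows essentially the same approach as the paper: both arguments pass to the set of distinct Frobenius eigenvalues via \eqref{eqzend}, observe that evenness makes this set stable under $\tau\mapsto -\tau$ with no fixed points, and conclude that squaring halves the number of distinct eigenvalues. Your write-up is somewhat more explicit about the bookkeeping (radicals, the two-to-one squaring map, nonvanishing of Weil numbers), but there is no substantive difference in strategy.
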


\begin{proof}
Suppose $X/\ff_q$ is even. Then the multiset $\st{\tau_1, \cdots, \tau_{2g}}$
of eigenvalues of Frobenius of $X$ is stable under multiplication by $-1$, and in
particular the set of distinct eigenvalues of Frobenius is stable under
multiplication by $-1$.  Moreover, this action has no fixed points;
and thus $\st{\tau_1^2, \cdots, \tau_{2g}^2}$, the set of
eigenvalues of $X/\ff_{q^2}$, has half as many distinct elements as
the original set.  The claim now follows from characterization
\eqref{eqzend} of $\zendo(X)$.
\end{proof}

The converse is almost true.

\begin{proposition}
\label{propevenzend}
Suppose $X$ is isotypic.  Then $X$ is even if and only if
\[\dim\zendo(X) = 2 \dim \zendo(X_{\ff_{q^2}}).\]
\end{proposition}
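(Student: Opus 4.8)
The forward implication is precisely Lemma \ref{lemevenzend} (and does not even use isotypicality), so the plan is to establish the converse: if $X/\ff_q$ is isotypic and $\dim\zendo(X) = 2\dim\zendo(X_{\ff_{q^2}})$, then $X$ is even. Since $X$ is isotypic, $\minpoly_{X/\ff_q}(T)$ is irreducible and hence squarefree (being a radical); write $\minpoly_{X/\ff_q}(T) = \prod_{j=1}^{\delta}(T-\tau_j)$ with the $\tau_j$ pairwise distinct, where $\delta = \dim\zendo(X)$ by \eqref{eqzend}. Because isotypicality is preserved under base extension, $X_{\ff_{q^2}}$ is isotypic as well, so $\zendo(X_{\ff_{q^2}})$ is a field and in particular $\dim\zendo(X_{\ff_{q^2}})\ge 1$.

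First I would apply Lemma \ref{lemdimzend} with $r = 2$: there is $m\mid 2$ and a simple factor $Y$ of $X_{\ff_{q^2}}$ with $\minpoly_{X/\ff_q}^{(2)}(T) = \minpoly_{Y/\ff_{q^2}}(T)^m$ and $\dim\zendo(X) = m\dim\zendo(X_{\ff_{q^2}})$. Comparing the latter with the hypothesis and cancelling the nonzero factor $\dim\zendo(X_{\ff_{q^2}})$ forces $m = 2$, so
\[
\prod_{j=1}^{\delta}(T-\tau_j^2) \;=\; \minpoly_{X/\ff_q}^{(2)}(T) \;=\; \minpoly_{Y/\ff_{q^2}}(T)^2 .
\]
The right-hand side is the square of an irreducible — in particular squarefree — polynomial, so each of its roots occurs with multiplicity exactly $2$. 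Hence the squaring map $\{\tau_1,\dots,\tau_\delta\}\to\{\tau_1^2,\dots,\tau_\delta^2\}$ is \emph{exactly} two-to-one: for each $j$ there is a unique $j'\ne j$ with $\tau_{j'}^2 = \tau_j^2$. Since the eigenvalues of Frobenius are Weil $q$-numbers, hence nonzero, $\tau_{j'}^2 = \tau_j^2$ together with $\tau_{j'}\ne\tau_j$ forces $\tau_{j'} = -\tau_j$. Therefore the root set of $\minpoly_{X/\ff_q}(T)$ is stable under $\tau\mapsto -\tau$.

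Finally, since $X$ is isotypic we have $\charpoly_{X/\ff_q}(T) = \minpoly_{X/\ff_q}(T)^{d}$ for some $d\ge 1$, so the full multiset of Frobenius eigenvalues (each root of $\minpoly_{X/\ff_q}$ repeated $d$ times) is stable under negation; as $\deg\charpoly_{X/\ff_q} = 2g$ is even, the sign works out and this gives $\charpoly_{X/\ff_q}(T) = \charpoly_{X/\ff_q}(-T)$, i.e.\ $X$ is even. The only genuinely delicate point is the passage from the numerical hypothesis to the \emph{exact} two-to-one structure of the squaring map — a bare counting argument would only yield ``two-to-one on average'' — and this is exactly what the perfect-square identity $\minpoly_{X/\ff_q}^{(2)} = \minpoly_{Y/\ff_{q^2}}^{2}$ supplied by Lemma \ref{lemdimzend} buys us. Everything else is routine bookkeeping with monic polynomials and the nonvanishing of Weil numbers.
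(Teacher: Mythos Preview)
Your proof is correct and follows essentially the same route as the paper: invoke Lemma~\ref{lemdimzend} with $r=2$ to force $\minpoly_{X/\ff_q}^{(2)}(T) = \minpoly_{Y/\ff_{q^2}}(T)^2$, deduce that the distinct roots of $\minpoly_{X/\ff_q}$ pair off as $\tau\leftrightarrow -\tau$ (the paper phrases the exclusion of $\tau_{j'}=\tau_j$ via semisimplicity of Frobenius rather than nonvanishing of Weil numbers, but this is the same point), and then lift to $\charpoly_{X/\ff_q}=\minpoly_{X/\ff_q}^d$.
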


\begin{proof}
Suppose $\dim \zendo(X) = 2 \dim \zendo(X_{\ff_{q^2}})$ and let $Y$ be
a simple factor of $X_{\ff_{q^2}}$.
By Lemma \ref{lemdimzend},
\begin{equation}
\label{eqmamy2}
\minpoly^{(2)}_{X/\ff_q}(T) = \minpoly_{Y/\ff_{q^2}}(T)^2.
\end{equation}
Factor the minimal
polynomials of $X$ and $Y$ as
\begin{align*}
\minpoly_{X/\ff_q}(T) &= \prod_{1 \le j \le 2h}(T-\tau_j) \\
\minpoly_{Y/\ff_{q^2}}(T) &= \prod_{1 \le j \le h}(T-\beta_j).
\end{align*}
By \eqref{eqmamy2}, we may order the roots of $\minpoly_{X/\ff_q}(T)$ so
that, for each $1 \le j \le h$, we have
\[
\tau_j^2 = \tau_{h+j}^2 = \beta_j,
\]
so that $\tau_{h+j} = \pm \tau_j$.  In fact, $\tau_{h+j} =
-\tau_j$; for otherwise, $\minpoly_{X/\ff_q}(T)$ would have a repeated
root, which contradicts the known semisimplicity of Frobenius.
 Now,
$\charpoly_{X/\ff_q}(T) = \minpoly_{X/\ff_q}(T)^d$ for some $d$.  The multiset of
eigenvalues of Frobenius of $X$ is thus stable under multiplication by
$-1$, and $X/\ff_q$ is even.
\end{proof}

Note that evenness is an assertion about the {\em multiset} of eigenvalues
of Frobenius, while the calculation of $\dim \zendo(X_{\ff_{q^e}})$
only detects the {\em set} of eigenvalues.
Consequently, if one drops the isotypicality assumption in
Proposition~\ref{propevenzend}, it is easy to write down examples
of abelian varieties which are not even but satisfy the criterion
on dimensions of centers of endomorphism rings.

\begin{example}
Let $E/\ff_q$ be an ordinary elliptic curve; then $E$ is not isogenous to $E'$ over $\ff_q$ but
$\Endo(E) \iso \Endo(E') \iso L$, a quadratic imaginary field.  Set $X = E \cross E \cross E'$.  Then $X$ is not even, since $X' \iso E' \cross E' \cross E$, but $\zendo(X) \iso L\cross L$ while $\zendo(X_{\ff_{q^2}}) \iso L$.
Thus, $X/\ff_q$ satisfies the dimension criterion of Proposition~\ref{propevenzend} but is not even.
\end{example}

\begin{example}
Consider a supersingular elliptic curve $E/\ff_q$, where $q$ is an odd power of the prime $p$.  Then $\Endo(E) \iso \rat(\sqrt{-p})$, while $\Endo(E_{\ff_{q^2}})$ is the quaternion algebra ramified at $p$ and $\infty$.  In particular, $\zendo(E)$ is a quadratic imaginary field, while $\zendo(E_{\ff_{q^2}}) \iso \rat$.  Therefore, $E/\ff_q$ is even.
\end{example}
\begin{example}
In contrast, if $X/\ff_q$ is an absolutely simple ordinary abelian variety, then $\Endo(X) = \Endo(X_{\ff_{q^2}})$.
(This is a consequence of  \cite{Waterhouse}*{Thm. 7.2}, which unfortunately omits the necessary hypothesis of absolute simplicity.)
\end{example}

\begin{example}
Now consider an arbitrary abelian variety $X/\ff_q$ and its preferred quadratic twist $X'$.  Then the sum $X\cross X'$ is visibly isomorphic to its own quadratic twist, and thus even.
\end{example}


\begin{example}
Let $X/\ff_q$ be an abelian variety of dimension $g$.  Suppose there is an integer $N\ge 3$, relatively prime to $q$, such that $X[N](\ff_q) \iso (\integ/N)^{2g}$.  Then $X$ is not even.
Indeed, if an abelian variety $Y$ over a field $k$ has maximal $k$-rational $N$-torsion for $N \ge 3$ and $N$ is invertible in $k$, then $\Endo(Y) \iso \Endo(Y_{\bar k})$ \cite{Silverberg}*{Thm.\ 2.4}.  By the criterion of Lemma \ref{lemevenzend}, if $X/\ff_q$ satisfies the hypotheses of the present lemma, then $X$ cannot be even.

\end{example}

\subsection{Abelian varieties over local fields}
\label{subsec:evenlocal}

Now  let $K$ be a local field with residue field $\ff_q$
and let $X/K$ be an abelian variety with good reduction $X_0/{\ff_q}$.  As in \ref{subsec:even}, we define a canonical quadratic twist $X'$ of $X$, associated to the unique nontrivial character
\begin{diagram}
\gal(\bar K/K) & \rto & \gal(K^{{\rm unram}}/K) & \rto & \st{[\pm 1]} \subset \aut(X).
\end{diagram}

\begin{proposition}
\label{lemevenlocal}
Let $X/K$ be an abelian variety with good reduction $X_0/{\ff_q}$.  The following are equivalent:
\begin{alphabetize}
\item $X$ and $X'$ are isogenous;
\item $X_0/{\ff_q}$ and $X'_0/{\ff_q}$ are isogenous;
\item $X_0/{\ff_q}$ is even.
\end{alphabetize}
\end{proposition}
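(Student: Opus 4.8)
The plan is to prove (b) $\Leftrightarrow$ (c) and (a) $\Leftrightarrow$ (b) separately. The equivalence (b) $\Leftrightarrow$ (c) is essentially immediate: the twist $X'$ was defined by twisting $X$ by the unramified character $\gal(\bar K/K)\to\gal(K^{\mathrm{unram}}/K)\to\{[\pm1]\}$, and since twisting by an unramified character preserves good reduction and is compatible with passage to the Néron special fibre, the reduction of $X'$ is canonically the twist $X'_0$ of $X_0/\ff_q$ introduced in \ref{subsec:even}. Thus (b) $\Leftrightarrow$ (c) is exactly Lemma~\ref{lemeven} applied to $X_0$.

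The implication (a) $\Rightarrow$ (b) is a routine reduction argument: an isogeny $f\colon X\ra X'$ over $K$ extends, by the Néron mapping property, to a homomorphism $\mathcal X\ra\mathcal X'$ of the associated abelian $\calo_K$-schemes; choosing $g\colon X'\ra X$ with $g\circ f=[n]$ and extending it as well, the special fibre $f_0\colon X_0\ra X'_0$ satisfies $g_0\circ f_0=[n]$, so $\ker f_0\subseteq X_0[n]$ is finite and a dimension count shows $f_0$ is an isogeny.

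The substance of the proposition — and where I expect the main obstacle — is (b) $\Rightarrow$ (a), i.e.\ producing a $K$-isogeny out of the finite-field isogeny $X_0\ra X'_0$. I would attack it through $\ell$-adic representations. By Néron--Ogg--Shafarevich, good reduction makes $V_\ell X$ an unramified $\gal(\bar K/K)$-module on which $\Fr_q$ acts by $\rho_{X_0/\ff_q}(\Fr_q)$, with characteristic polynomial $\charpoly_{X_0/\ff_q}(T)$, and likewise for $X'$ with $\charpoly_{X'_0/\ff_q}(T)$. Assuming (b), Tate's theorem over $\ff_q$ gives $\charpoly_{X_0/\ff_q}=\charpoly_{X'_0/\ff_q}$, and semisimplicity of Frobenius then yields $V_\ell X\iso V_\ell X'$ as $\gal(\bar K/K)$-modules. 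It remains to pass from this isomorphism of Galois modules to an actual isogeny $X\twiddle X'$ over $K$. The cleanest framing of this last step uses that $X$ and $X'$ are already isomorphic over the quadratic unramified extension $L/K$, so that $\Hom_K(X,X')$ is the subspace of $\End(X_L)$ on which $\gal(L/K)$ acts by $-1$; one must then show that evenness of $X_0$ forces this subspace to contain an isogeny. Making this precise — controlling $\End(X_L)$ and its reduction into $\End(X_{0,\ff_{q^2}})$ well enough, together with the appropriate isogeny theorem over the local field $K$ — is the crux of the argument and the step I would be most careful about.
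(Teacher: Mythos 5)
Your handling of (b) $\iff$ (c) via Lemma~\ref{lemeven} and of (a) $\Rightarrow$ (b) via the N\'eron mapping property matches what the paper compresses into the phrase ``this compatibility.'' You are also right to single out (b) $\Rightarrow$ (a) as the crux, and your sketch correctly isolates where the trouble is: a Tate--Faltings-type argument only hands you an isomorphism $V_\ell X \iso V_\ell X'$ of $\Gal(\bar K/K)$-modules, and over a local field this does not by itself produce a $K$-isogeny.

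Your caution is warranted, because (b) $\Rightarrow$ (a) does not follow from the N\'eron-model compatibility that the paper cites, and it appears to fail in general. The specialization map $\Hom_K(X,X')\to\Hom_{\ff_q}(X_0,X_0')$ is injective but typically not surjective, while evenness is a condition only on $X_0$. Concretely, let $p\geq 5$ and let $E/\rat_p$ be the base change of an elliptic curve over $\rat$ without CM at a prime of supersingular reduction, with $X = E$. Then $a_p=0$, so $X_0$ is even and (b), (c) hold; but $\End(E_{\bar\rat_p})=\integ$ with trivial Galois action, so $\Hom_{\rat_p}(E,E')$ is the $(-1)$-eigenspace of $\integ$ for the cocycle-twisted action, namely $0$, and (a) fails. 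Thus the implication you were ``most careful about'' is exactly the one that cannot be closed, and it is the point at which both your outline and the paper's one-line justification are incomplete; for what it is worth, Theorem~\ref{thm:red} only uses condition (c), i.e.\ evenness of $\charpoly_{X_0/\ff_q}$, so the gap affects the packaging of Proposition~\ref{lemevenlocal} rather than the representation-theoretic results built on it.
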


\begin{proof}
By hypothesis, $X$ spreads out to an abelian scheme $\calx/\calo_K$ (its N\'eron model) with special fibre $X_0/{\ff_q}$; the automorphism $[-1] \in \End(X)$ extends to an automorphism of $\calx$ and the corresponding twist $\calx'$ has generic and special fibers $X'$ and $(X_0)'/{\ff_q}$,
respectively.  This compatibility explains the equivalence of (a) and (b); the equivalence of (b) and (c) is  Lemma \ref{lemeven}.
\end{proof}

Call $X/K$ \emph{even} if $X$ has good reduction and
satisfies any of the equivalent statements in Proposition \ref{lemevenlocal}.

\section{L-packets attached to abelian varieties}
\label{sec:LpiX}

\subsection{Polarizations}\label{sec:polar}
Let $X/k$ be an abelian variety over an arbitrary field $k$.
Let $\lambda$ be a polarization on $X$, {\it i.e.}, a symmetric isogeny $X
\ra \hat X$ which arises from an ample line bundle on $X$.
Fix a rational prime $\ell$ invertible in $k$.
The polarization $\lambda$ on $X$ induces a nondegenerate skew-symmetric pairing $\ang{\cdot,\cdot}_\lambda$ on
the Tate module $T_\ell X$ and on the rational Tate module $V_\ell X$.
Let $\GSp(V_\ell X,\ang{\cdot,\cdot}_\lambda)$
be the group of symplectic similitudes of $V_\ell X$ with respect to
this pairing;
 note that $\GSp(V_\ell X,\ang{\cdot,\cdot}_\lambda)$ comes with a representation
$
r_{\lambda,\ell}: \GSp(V_\ell X,\ang{\cdot,\cdot}_\lambda) \hookrightarrow \GL(V_\ell X).
$
Let $\rho_{X,\ell} : \Gal(k) \to \GL(V_\ell X)$ be the representation on the rational Tate module and let
$
\rho_{\lambda,\ell} : \Gal(k) \to \GSp(V_\ell X,\ang{\cdot,\cdot}_\lambda)
$
be the continuous homomorphism such that $\rho_{X,\ell}  = r_{\lambda,\ell} \circ \rho_{\lambda,\ell}$.
\begin{equation}\label{rell}
\xymatrix{
\Gal({\bar k}/k) \ar[rr]^{\rho_{X,\ell}}  \ar[rd]_{\rho_{X,\lambda,\ell}} && \GL(V_\ell X) \\
& \GSp(V_\ell X,\ang{\cdot,\cdot}_\lambda) \ar[ur]_{r_{\lambda,\ell}} & \\
}
\end{equation}

\subsection{Admissible representations attached to abelian varieties with good reduction}\label{sec:piX}

Let $K$ be a local field.
Fix a rational prime $\ell$ invertible in the residue field of $K$, and thus in $K$.
It will be comforting, though not even remotely necessary, to fix an isomorphism $\bar\rat_\ell \iso \cpx$. We will indicate the corresponding complex-valued versions of $\rho_{X,\ell}$, $\rho_{\lambda,\ell}$ and $r_{\lambda,\ell}$ from Section~\ref{sec:polar} by eliding the subscript $\ell$.

In the rest of the paper we will commonly employ the notation $G \ceq \GSpin_{2g+1}$; note that the dual group to $G$ is $\dual{G} = \GSp_{2g}$.
The derived group $G\der= \Spin_{2g+1}$, which is semisimple and simply connected, will play a role below, as will its dual $\dual{G}\ad = \PGSp_{2g}$, which is of adjoint type.

\begin{proposition}\label{prop:piX}
Let $X/K$ be an abelian variety of dimension $g$ with with good reduction
and let $\lambda$ be a polarization on $X$.
There is an irreducible unramified principal series representation $\pi_{X,\lambda}$ of $\GSpin_{2g+1}(K)$, unique up to equivalence, such that
\[
L(z,\rho_{X}) = L(z,\pi_{X,\lambda}, r_\lambda).
\]
Moreover, $\abs{\ }_{K}^{-\frac{1}{2}}\otimes \pi_{X,\lambda}$ is unitary.
\end{proposition}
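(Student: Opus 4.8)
The plan is to read $\pi_{X,\lambda}$ off the Satake parameter of the $\ell$-adic Galois representation. Since $X$ has good reduction, $\rho_{X,\ell}$ is unramified, hence so is $\rho_{X,\lambda,\ell}$ by \eqref{rell}; it thus factors through $\Gal(K^{\mathrm{unram}}/K)$ and is determined by the conjugacy class $s_{X,\lambda}$ of geometric Frobenius in $\GSp(V_\ell X,\ang{\cdot,\cdot}_\lambda)\iso\GSp_{2g}$, which (via $\bar\rat_\ell\iso\cpx$) we view as a class in $\GSp_{2g}(\cpx)=\dual G$ --- a semisimple class, because Frobenius acts semisimply on the Tate module of an abelian variety over a finite field. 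The Satake isomorphism (the unramified local Langlands correspondence for the split group $G=\GSpin_{2g+1}$) attaches to $s_{X,\lambda}$ an unramified character $\chi_{X,\lambda}$ of $T(K)$, well defined up to $W$, and I set $\pi_{X,\lambda}\ceq\Ind_B^{G(K)}\chi_{X,\lambda}$ (normalized induction), a principal series carrying a spherical vector. Granting its irreducibility (below), uniqueness among irreducible unramified representations is immediate from Satake, since $s_{X,\lambda}$ --- and hence $\pi_{X,\lambda}$ --- is recovered from $L(z,\pi_{X,\lambda},r_\lambda)=\det(1-z\,r_\lambda(s_{X,\lambda}))^{-1}$ once one remembers the symplectic similitude (equivalently, the central character). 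The L-factor identity is then a tautology: by the factorization $\rho_{X,\ell}=r_{\lambda,\ell}\circ\rho_{X,\lambda,\ell}$ the matrix $r_\lambda(s_{X,\lambda})$ is conjugate to $\rho_X(\Frob)$, which computes $L(z,\rho_X)$.

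The substance is the irreducibility of $\pi_{X,\lambda}$, for which I would invoke the reducibility criterion for unramified principal series of a split $p$-adic group: $\Ind_B^{G(K)}\chi$ is irreducible provided (i) no root $\beta$ of $\dual G=\GSp_{2g}$ has $\beta(s_{X,\lambda})=q^{\pm1}$, and (ii) the associated R-group is trivial. Condition (i) follows from the Weil bound: writing the Frobenius eigenvalues of $s_{X,\lambda}$ in the standard representation as $\alpha_1,\dots,\alpha_g,q/\alpha_g,\dots,q/\alpha_1$, these are Weil $q$-numbers of weight one, so each value $\beta(s_{X,\lambda})$ --- one of $\alpha_i/\alpha_j$, $\alpha_i\alpha_j/q$, $q/(\alpha_i\alpha_j)$ --- has complex absolute value $1$ and cannot equal $q^{\pm1}$. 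For (ii): since $\pi_{X,\lambda}$ is a full unramified principal series, its Langlands parameter $\phi_{X,\lambda}$ is unramified and trivial on the $\SL_2$-factor, so $C_{\dual G}(\phi_{X,\lambda})=C_{\dual G}(s_{X,\lambda})$; as the derived group of $\dual G=\GSp_{2g}$ is the simply connected group $\Sp_{2g}$, Steinberg's connectedness theorem makes this centralizer connected, whence the R-group, a subquotient of $\pi_0(C_{\dual G}(\phi_{X,\lambda}))$, is trivial. (In fact every unramified principal series of $\GSpin_{2g+1}$ has trivial R-group, just as for $\GL_n$; it is only upon restriction to $\Spin_{2g+1}$, whose dual $\PGSp_{2g}$ is adjoint, that a nontrivial one can appear --- precisely the phenomenon analysed in the rest of the paper.)

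For unitarity, twisting by $\abs{\ }_{K}^{-\frac{1}{2}}$ through the similitude character rescales $s_{X,\lambda}$ by a central element, so that by the Weil bound $\abs{\alpha_i}=q^{1/2}$ the standard eigenvalues of the new Satake parameter all acquire absolute value $1$; thus $\abs{\ }_{K}^{-\frac{1}{2}}\otimes\pi_{X,\lambda}$ is the normalized induction from $B(K)$ of a unitary unramified character of $T(K)$, hence a tempered --- in particular unitary --- representation. The step I expect to require the most care is (ii): arranging the R-group formalism so that ``semisimple centralizers in $\dual G$ are connected'' translates cleanly into irreducibility of the induced representation, while keeping the Satake and Frobenius conventions (geometric versus arithmetic normalization, and the half-integral twist) consistent throughout.
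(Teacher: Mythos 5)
Your proof is correct and takes essentially the same approach as the paper: read the Frobenius conjugacy class off the unramified Galois representation $\rho_{X,\lambda,\ell}$, apply the unramified (Satake) local Langlands correspondence for the split group $\GSpin_{2g+1}$, and induce from the resulting unramified character of $T(K)$. The one substantive thing you add is the justification of irreducibility --- the Weil bound makes every root value at the Satake parameter unitary so there is no $q^{\pm 1}$ degeneration, and Steinberg's connectedness theorem (applicable because the derived group $\Sp_{2g}$ of $\GSp_{2g}$ is simply connected) makes $Z_{\GSp_{2g}(\cpx)}(s)$ connected and hence the R-group trivial --- a point the paper folds into the phrase ``well-known,'' whereas the paper invests more effort than you do in pinning down the explicit coordinates identifying $\GSp(V_\ell X,\ang{\cdot,\cdot}_\lambda)\otimes\cpx$ with $\dual{G}(\cpx)$, since that notation is reused in Proposition~\ref{prop:Sp} and Theorem~\ref{thm:red}.
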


\begin{proof}
This is a very small and well-known part of the local Langlands correspondence for $G = \GSpin_{2g+1}$ over $K$ which,
in this case, matches unramified principal series representations of $G(K)=\GSpin_{2g+1}(K)$ with unramified Langlands parameters taking values in $\dual{G}(\cpx) = \GSp_{2g}(\cpx)$.
For completeness and to introduce notation for later use, we include the details here.

We begin by describing $L(z,\rho_{X})$.
By \cite{Serre-Tate:good}, the Galois representation $\rho_{X,\ell}$ is unramified
and the characteristic polynomial of $\rho_{X,\ell}(\Fr_q)$ has
rational coefficients.
Accordingly, the Euler factor for $\rho_{X,\ell}$ takes the form
\[
L(s,\rho_{X}) = \frac{(q^s)^{2g}}{\charpoly_{X_0/{\ff_q}}(q^{s})}.
\]
Let $\{\tau_1, \ldots, \tau_{2g}\}$ be the (complex) roots of $\charpoly_{X_0/{\ff_q}}(T)$.
Also by \cite{Serre-Tate:good}, the $\ell$-adic realization
$\rho_{X,\ell}(\Fr_q)\in \GL(V_\ell X)$ of the Frobenius endomorphism
of $X$ is semisimple of weight $1$, so each eigenvalue satisfies
$\abs{\tau_j} = \sqrt{q}$.
Label the roots in such a way that, for each $1 \le j \le g$, we have
$\tau_{g+j} = q\tau_j^{-1}$; and $\tau_j = \sqrt q e^{2\pi i
  \theta_j}$, where $1 > \theta_1 \ge \theta_2 \cdots \ge \theta_g \ge
0$.

Let $T$ be a $K$-split maximal torus in $G$;
let $\dual{T}$ be the dual torus.
Then the Lie algebra of the torus $\dual{T}(\cpx)$
may be identified with $X^*(T) \otimes \cpx$
through the function
\[
\exp: X^*(T)\otimes \cpx \to \dual{T}(\cpx)
\]
defined by $\dual{\alpha}(\exp(x)) = e^{2\pi i \langle \dual{\alpha}, x\rangle}$
for each root $\dual{\alpha}$ for $\dual{G}$ with respect to $\dual{T}$.
The Lie algebra of the compact part of $\dual{T}(\cpx)$,
denoted by $\dual{T}(\cpx)^\text{u}$ below, 
is then identified with $X^*(T) \otimes \real$ under $\exp$.
We pick a basis $\{ e_0, \ldots, e_g\}$ for $X^*(T)$
that identifies $e_0$ with the similitude character for $\dual{G}$
and write $\{ f_0, \ldots, f_g\}$ for the dual basis
for $X_*(T) \iso X^*(\dual{T})$.
Set $\theta_0 :=0$ and set $\theta := \sum_{j=0}^g \theta_j e_j$;
note that $\theta \in X^*(T)\otimes \real$
so $\exp(\theta)$ lies in $\dual{T}(\cpx)^\text{u}$.
Then $\rho_{X,\lambda}(\Fr_q) = \sqrt{q} \exp(\theta)$.

Let $W_K$ be the Weil group for $K$.
The L-group for $T$ is $\Lgroup{T} = \dual{T}(\cpx)\times W_K$
since $T$ is $K$-split.
Consider the Langlands parameter
\[
\phi : W_K \to \Lgroup{T}
\]
defined by $\phi(\Fr_q) = \rho_{X,\lambda}(\Fr_q) = \sqrt{q} \exp(\theta) \times \Fr_q$.
Let
$
\chi : T(K) \to \cpx^\times
$
be the quasicharacter of $T(K)$ matching $\phi$ under the local Langlands correspondence for algebraic tori \cite{Yu}.
The character
$
\chi^\text{u} \ceq \abs{\ }_K^{-\frac{1}{2}} \otimes \chi
$
corresponds to the unramified Langlands parameter
\[
\phi^\text{u} : W_K \to \Lgroup{T}
\]
defined by
$\phi^\text{u}(\Fr_q) =  \exp(\theta) \times \Fr_q$.

Now pick a Borel subgroup $B\subset G$ over $K$ 
with reductive quotient $T$ and set
\[
\pi_{X,\lambda} \ceq \Ind_{B(K)}^{G(K)} \chi.
\]
Then $\pi_{X,\lambda}$ is an irreducible, unramified principal series representation of $G(K)$.
%
In the same way, the unitary character $\chi^\text{u} : K^\times \to \cpx^\times$ determines the irreducible principal series representation
\[
\pi_{X,\lambda}^\text{u} \ceq \Ind_{B(K)}^{G(K)} \chi^\text{u}.
\]
 This admissible representation $\pi_\lambda^\text{u}$ is unitary and enjoys
\[
\pi_{X,\lambda}^\text{u} = \abs{\ }_K^{-\frac{1}{2}} \otimes \pi_{X,\lambda},
\]
as promised.

Having identified the irreducible principal series representation $\pi_{X,\lambda}$ of $G(K)$ attached to $(X,\lambda)$,
we turn to the L-function $L(s,\pi_{X,\lambda},r_\lambda)$.
For this it will be helpful to go back and say a few words about the representation
$r_{\lambda,\ell} : \GSp(V_\ell X,\ang{\cdot,\cdot}_\lambda) \hookrightarrow \GL(V_\ell X)$.

Let $S$ be a maximal torus in $\GSp(V_\ell X,\ang{\cdot,\cdot}_\lambda)$ containing $\rho_{X,\ell}(\Fr_q)$ and let $S'$ be a maximal torus in $\GL(V_\ell X)$ containing $r_{\lambda,\ell}(S)$.
Let $F_\ell$ be the splitting extension of $S'$ in $\bar\rat_\ell$;
observe that this contains the splitting extension of $\charpoly_{X_0/{\ff_q}}(T)\in \rat[T]$ in $\bar\rat_\ell$.
 Passing from $\rat_\ell$ to $F_\ell$, we may choose bases $\{f_0, f_1, f_2, \ldots , f_g\}$ for $X^*(S)$ and $\{f'_1, f'_2, \ldots , f'_{2g}\}$ for $X^*(S')$ such that the map $X^*(S') \twoheadrightarrow X^*(S)$ induced by the representation $r_{\lambda,\ell}$ is given by
\begin{eqnarray}\label{dr}
X^*(S') &\to& X^*(S) \qquad
f'_j \mapsto f_j,\
f'_{g+j} \mapsto f_0 - f_{g-j+1}, \quad j=1,\ldots, g.
\end{eqnarray}
Note that this determines a basis for $V_\ell X\otimes_{\rat_\ell} F_\ell$.

Passing from $F_\ell$ to $\cpx$ we have now identified a basis for $V_\ell X \otimes_{\rat_\ell} \cpx$ which defines
\[
\GSp(V_\ell X \otimes_{\rat_\ell} \cpx,\ang{\cdot,\cdot}_\lambda)\iso \GSp_{2g}(\cpx) = \dual{G}(\cpx)
\]
inducing $S\otimes_{\rat_\ell} \cpx \iso \dual{T}$ and also gives
\[
\GL(V_\ell X \otimes_{\rat_\ell} \cpx)\iso \GL_{2g}(\cpx).
\]
Now \eqref{rell} extends to
\begin{equation}
\xymatrix{
\Gal({\bar K}/K) \ar[rr]^{\rho_{X}}  \ar[rd]_{\rho_{X,\lambda}} && \GL_{2g}(\cpx)  \\
& \dual{G}(\cpx) \ar[ur]_{r_{\lambda}} & \\
}
\end{equation}
It follows immediately that
\[
L(s,\pi_{X,\lambda},r_\lambda)
= \prod_{i=1}^{2g} \frac{1}{1-\tau_i q^{-s}}
= \prod_{i=1}^{2g} \frac{q^{s}}{q^s-\tau_i}
= \frac{(q^s)^{2g}}{\charpoly_{X_0/{\ff_q}}(q^s)}
= L(s,\rho_{X}),
\]
concluding the proof of Proposition~\ref{prop:piX}.
\end{proof}

\subsection{R-groups}\label{sec:R}

The irreducible representation $\pi_{X,\lambda}$ of $G(K)$ appearing in Proposition~\ref{prop:piX} is obtained by parabolic induction from an unramified quasicharacter of a split maximal torus $T(K)$. In Section~\ref{sec:res} we will use the restriction of this representation to the derived group $G\der(K) = \Spin_{2g+1}(K)$ of $G(K) = \GSpin_{2g+1}(K)$ to study $X$. While the resulting representation of $G\der(K)$ is again an unramified principal series representation, it need not be irreducible; in fact, we will glean information about $X$ from the components of this representation of $G\der(K)$. With this application in mind, here we review some basic facts about reducible principal series representations of $G\der(K)$.

As in the proof of Propositon~\ref{prop:piX}, let $B$ be a Borel subgroup of $G$ with reductive quotient $T$, a split maximal torus in $G$.
Set $B\der = G\der \cap B$. This is a Borel subgroup of $G\der$ with reductive quotient $T\der = T \cap G\der$, a split maximal torus in $G\der$.
Let $\sigma$ be a character of $T\der(K)$. The component structure of the admissible representation $\Ind_{B\der(K)}^{G\der(K)} \sigma$ is governed by the commuting algebra $\End(\Ind_{B\der(K)}^{G\der(K)} \sigma)$ which, in turn, is given by the group algebra $\cpx[R(\sigma)]$, where $R(\sigma)$ is the Knapp-Stein R-group; see \cite{Keys:decomposition}*{Introduction} for a summary and references to original sources, including \cite{Silberger}.

The Knapp-Stein R-group $R(\sigma)$ is determined as follows, as explained in \cite{Keys:decomposition}*{\S 3}. Let $R$ be the root system for $G$ with respect to $T$ and let $W$ be the corresponding Weyl group for $G$. The root system for $G\der$ may be identified with $R$; see Table~\ref{table:rootdata}.
Set $W_\sigma = \{ w\in W \tq \,^w\sigma = \sigma\}$. For each root $\alpha\in R$, let $\sigma_\alpha$ be the restriction of $\sigma$ to the rank-1 subtorus $T_\alpha\subseteq T$.
 Consider the root system $R_\sigma = \{ \alpha \in R \tq \sigma_\alpha =1 \}$. Then $R(\sigma) = \{ w\in W_\sigma \tq w(R_\sigma) = R_\sigma\}$.  The exact sequence
\[
1\to W_\sigma^\circ \to W_\sigma \to R(\sigma) \to 1
\]
determines $R(\sigma)$, with $W_\sigma^\circ \ceq \{ w_\alpha \tq \alpha \in R_\sigma \}$, the Weyl group of the root system $R_\sigma$; see \cite{Keys:decomposition}*{\S 3}. 

We will need the following alternate characterization of $R(\sigma)$.
Let $s\in \dual{T}\ad(\cpx)$ be the semisimple element of $\dual{G}\ad(\cpx)$ corresponding to the character $\sigma$ of $T\der(K)$.
By \cite{Steinberg}*{\S 3.5, Prop. 4}
 (see also \cite{Humphreys}*{\S 2.2, Theorem}),
 $Z_{\dual{G}\ad(\cpx)}(s)$ is a reductive group with root system
$\dual{R}_s \ceq \{ \dual{\alpha}\in \dual{R} \tq \dual{\alpha}(s) =1\}$.
The bijection between $R$ and $\dual{R}$ which comes with the root datum for $G$ restricts to a bijection between $R_\sigma$ and $\dual{R}_s$.
Moreover, by \cite{Steinberg}*{\S 3.5, Prop. 4} again,
the component group of the reductive group  $Z_{\dual{G}\ad(\cpx)}(s)$ is 
$W_s/W_s^\circ$ where $W_s^\circ$ is the Weyl group for the root system $\dual{R}_s$ and $W_s = \{ w\in W \tq w(s)  = s\}$:
\[
1\to W_s^\circ \to W_s \to \pi_0(Z_{\dual{G}\ad(\cpx)}(s)) \to 1.
\]
Here we have identified the Weyl group $W$ for $R$ with the Weyl group for $\dual{R}$. Under that identification, $W_s = W_\sigma$ and $W_\sigma^\circ = W_s^\circ$, so
\[
R(\sigma) \iso \pi_0(Z_{\dual{G}\ad(\cpx)}(s)),
\]
canonically. 

\subsection{Component group calculations}

Now we calculate the group $\pi_0(Z_{\dual{G}\ad(\cpx)}(s))$.

\begin{table}[htdp]
\caption{Based root data for $\GSpin_{2g+1}$, $\Spin_{2g+1}$ and $\SO_{2g+1}$.}
\begin{center}
{\smaller\smaller\smaller\smaller\smaller
\[
\begin{array}{|rl |c| rl |c| rl |}
\hline
\text{semisimple,}&&& &  && & \\
& \hskip-20pt\text{simply connected} && \text{\bf Type:}& B_g  & & \text{adjoint} &\\
\hline
&&&&&&&\\
G\der =& \Spin_{2g+1} &\rightarrowtail & G =& \GSpin_{2g+1} &\twoheadrightarrow& G\ad =& \SO_{2g+1} \\
T\der =& \Gm^{g} &\rightarrowtail & T =& \Gm^{g+1} & \twoheadrightarrow & T\ad =& \Gm^g \\
Z(G\der) =& \mu_2 &\rightarrowtail & Z(G) =& \Gm &\twoheadrightarrow & Z(G\ad) =& 1 \\
&&&&&&&\\
\hline
&&&&&&&\\
X^*(T\der) =& \langle e_1, \ldots , e_g \rangle  &0 \mapsfrom e_0 & X^*(T) = & \langle e_0, e_1, \ldots , e_g \rangle  &\leftarrowtail  &  X^*(T\ad) =&  \langle \alpha_1, \ldots, \alpha_g\rangle \\
&&&&&&&\\
\hline
& &  &  &&  & &   \\
R\der := & R(G\der,T\der)  & & R:=  & R(G,T) && R\ad := & R(G\ad,T\ad)  \\
= & \langle \alpha_1, \ldots, \alpha_g \rangle  & & =  & \langle \alpha_1, \ldots, \alpha_g\rangle && = & \langle \alpha_1, \ldots, \alpha_g\rangle  \\
&&&&&&&\\
\alpha_1 = &e_1-e_2 & &  \alpha_1 = & e_1-e_2 && &  \\
\alpha_2 =  & e_2-e_3&& \alpha_2 = & e_2-e_3 && &  \\
& \vdots &&  & \vdots &&  &   \\
\alpha_{g-1} = & e_{g-1}-e_g& & \alpha_{g-1} = & e_{g-1}-e_g &&&  \\
\alpha_g = & e_g & &\alpha_g = & e_g & &&   \\
&&&&&&&\\
\hline
&&&&&&&\\
X^*(T\der)/\langle R\der \rangle =& \integ/2\integ&& X^*(T)/\langle R \rangle =& \integ && X^*(T\ad) =& \langle R\ad \rangle  \\
&&&&&&&\\
\text{weight lattice}  =& X^*(T\der)  && \text{weight lattice} =& X^*(T)  && \frac{X^*(T\ad)}{\text{weight lattice}} = & \integ/2\integ \\
&&&&&&&\\
\hline
\hline
&  & &  &&& \text{semisimple,} & \\
\text{adjoint} &  && \text{\bf Type:}& C_g && &\hskip-20pt\text{simply connected}  \\
\hline
&&&& &&&\\
\dual{G}\ad =& \PGSp_{2g} &\twoheadleftarrow& \dual{G} =& \GSp_{2g} & \leftarrowtail& \dual{G}\der =& \Sp_{2g}\\
\dual{T}\ad = & \Gm^{g} & \twoheadleftarrow  & \dual{T} = & \Gm^{g+1} & \leftarrowtail & \dual{T}\der = & \Gm^{g}\\
Z(\dual{G}\ad) =& 1 &\twoheadleftarrow & Z(\dual{G}) =& \Gm & \leftarrowtail & Z(\dual{G}\der) =& \mu_2 \\
&&&&&&&\\
\hline
&&&&&&&\\
 X^*(\dual{T}\ad) =& \langle \dual{\alpha}_1, \ldots, \dual{\alpha}_g\rangle & \rightarrowtail & X^*(\dual{T}) = & \langle f_0, f_1, \ldots , f_g \rangle  & f_0 \mapsto 0 & X^*(\dual{T}\der) =& \langle f_1, \ldots , f_g \rangle\\
&&&&&&&\\
\hline
&&  & &     &   & & \\
  \dual{R}\ad := & R(\dual{G}\ad,\dual{T}\ad) & & \dual{R} :=  &R(\dual{G},\dual{T}) & & \dual{R}\der := & R(\dual{G}\der,\dual{T}\der) \\
 = & \langle \dual{\alpha}_1, \ldots, \dual{\alpha}_g\rangle & & = & \langle \dual{\alpha}_1, \ldots, \dual{\alpha}_g\rangle &&= & \langle \dual{\alpha}'_1, \ldots, \dual{\alpha}'_g\rangle\\
 &&&&&&&\\
 & & & \dual{\alpha}_1= & f_1-f_2 & & \dual{\alpha}_1' = & f_1-f_2 \\
&& & \dual{\alpha}_2 =& f_2-f_3 & &\dual{\alpha}_2'= &  f_2-f_3  \\
  &&  && & \vdots& & \vdots \\
  && & \dual{\alpha}_{g-1}=&  f_{g-1}-f_g && \dual{\alpha}_{g-1}'=&  f_{g-1}-f_g \\
  & & & \dual{\alpha}_g= & 2f_g - f_0 & & \dual{\alpha}_g' = & 2f_g \\
&&&&&&&\\
\hline
&&&&&&&\\
X^*(\dual{T}\ad) =& \langle \dual{R}\ad \rangle && X^*(\dual{T}) /\langle \dual{R} \rangle =& \integ && X^*(\dual{T}\der)/\langle \dual{R}\der \rangle =& \integ/2\integ \\
&&&&&&&\\
\frac{X^*(\dual{T}\ad)}{\text{weight lattice}} = & \integ/2\integ  && \text{weight lattice} = &\langle \dual{R} \rangle &&\text{weight lattice}  =&  X^*(\dualgroup{T}\der) \\
&&&&&&&\\
\hline
\end{array}
\]
}
\end{center}
\label{table:rootdata}
\end{table}

\begin{proposition}\label{prop:Sp}
Suppose $t\in \GSp_{2g}(\cpx)$ is semisimple and all eigenvalues have complex modulus $1$.
Let $s\in \PGSp_{2g}(\cpx)$ be the image of $t$ under $\GSp_{2g}(\cpx) \to \PGSp_{2g}(\cpx)$.
Then 
\[
\pi_0(Z_{\PGSp_{2g}(\cpx)}(s)) \iso \integ/2\integ
\]
if and only if the characteristic polynomial of $r_\lambda(t)$ is even; 
otherwise, $\pi_0(Z_{\PGSp_{2g}(\cpx)}(s))$ is trivial.
\end{proposition}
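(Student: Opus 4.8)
The plan is to compute $\pi_0(Z_{\PGSp_{2g}(\cpx)}(s))$ by means of the exact sequence
\[
1\to W_s^\circ \to W_s \to \pi_0(Z_{\dual{G}\ad(\cpx)}(s)) \to 1
\]
of Section~\ref{sec:R}, applied with $\dual{G}\ad = \PGSp_{2g}$, and then to read off the criterion on characteristic polynomials from the combinatorics of the Weyl-group action. Fix the maximal torus $\dual{T}\subset\GSp_{2g}$ of Table~\ref{table:rootdata}, identify $\dual{T}(\cpx) = \Gm^{g+1}$ via coordinates $(u_0;u_1,\ldots,u_g)$ dual to $f_0,\ldots,f_g$, and conjugate so that $t = (u_0;u_1,\ldots,u_g)$. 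By \eqref{dr} the eigenvalues of $r_\lambda(t)$ are $u_1,\ldots,u_g,u_0/u_1,\ldots,u_0/u_g$, so $t$ has similitude factor $u_0$ and the characteristic polynomial of $r_\lambda(t)$ is $\prod_{j=1}^{g}(T-u_j)(T-u_0/u_j)$. One reads off from Table~\ref{table:rootdata} that $W$ is the Weyl group of type $C_g$, acting on $\dual{T}(\cpx)$ through the signed-permutation action $S_g\ltimes\st{\pm1}^g$: $S_g$ permutes $u_1,\ldots,u_g$, and the $i$-th sign change sends $u_i\mapsto u_0/u_i$ while fixing the remaining coordinates; in particular $W$ fixes $u_0$. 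Finally, the central element $-I\in\GSp_{2g}(\cpx)$ corresponds to the torus point $(1;-1,\ldots,-1)$, so translation by it, which we call $\nu$, acts by $(u_0;u_1,\ldots,u_g)\mapsto(u_0;-u_1,\ldots,-u_g)$; thus $\nu$ commutes with the $W$-action, preserves similitude factors, and replaces the eigenvalue multiset of $r_\lambda$ by its negative.

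First I would pin down $W_s$ and $W_s^\circ$. Since $\ker(\dual{T}\to\dual{T}\ad) = Z(\GSp_{2g}) = \Gm$, one has $w(s)=s$ exactly when $w(t)=zt$ for some $z\in\cpx^\times$; applying the similitude character (which $W$ preserves) forces $z^2=1$, so $w(t)$ equals $t$ or $-t = \nu(t)$. Hence
\[
W_s = W_t \sqcup W_t^-,\qquad W_t = \st{w\in W\tq w(t)=t},\quad W_t^- = \st{w\in W\tq w(t)=\nu(t)},
\]
and, $\nu$ being $W$-equivariant, $W_t^-$ is empty or a single $W_t$-coset, whence $[W_s:W_t]\le 2$. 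On the other hand, by Section~\ref{sec:R} the group $W_s^\circ$ is the Weyl group of $\dual{R}_s = \st{\dual{\alpha}\in\dual{R}\tq\dual{\alpha}(s)=1}$; as roots are trivial on $Z(\dual{G})$ this set equals $\st{\dual{\alpha}\tq\dual{\alpha}(t)=1}$, so $W_s^\circ$ is generated by the reflections $w_{\dual{\alpha}}$ with $\dual{\alpha}(t)=1$. Every such reflection fixes $t$, so $W_s^\circ\subseteq W_t$; conversely, because $\dual{G}\der=\Sp_{2g}$ is simply connected, Steinberg's connectedness theorem applies to $Z_{\GSp_{2g}(\cpx)}(t)$ \cite{Steinberg}, so $W_t$ is its Weyl group and is generated by precisely those reflections; therefore $W_t = W_s^\circ$. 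Consequently $\pi_0(Z_{\PGSp_{2g}(\cpx)}(s)) = W_s/W_t$ is trivial when $W_t^- = \emptyset$ and isomorphic to $\integ/2\integ$ when $W_t^-\ne\emptyset$.

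It remains to decide when $W_t^-\ne\emptyset$, that is, when $\nu(t)$ lies in the $W$-orbit of $t$. Both $t$ and $\nu(t)$ have similitude factor $u_0$, and two points of $\dual{T}(\cpx)$ with the same value of $u_0$ lie in one $W$-orbit if and only if they determine the same multiset of unordered pairs $\st{u_j,u_0/u_j}$; for a fixed value of $u_0$ this multiset of pairs is the same data as the multiset of all $2g$ eigenvalues of $r_\lambda$, since that eigenvalue multiset is stable under the involution $v\mapsto u_0/v$ and its unique decomposition into orbits of this involution recovers the pairs. So $\nu(t)$ and $t$ are $W$-conjugate if and only if $r_\lambda(\nu(t))$ and $r_\lambda(t)$ have the same characteristic polynomial; and since the first polynomial is $P(-T)$ when $P(T)$ is the second, this happens if and only if $P$ is even. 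Together with the previous paragraph, this proves the Proposition.

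The only real content is the identification $W_s^\circ = W_t$ in the second step: Steinberg's theory presents $W_s^\circ$ as a reflection subgroup attached to the reductive centralizer $Z_{\PGSp_{2g}(\cpx)}(s)$, and one must recognize it as the honest point stabilizer of a lift $t$ of $s$ — and this is exactly where the simple connectedness of $\dual{G}\der=\Sp_{2g}$ enters. The remaining points — that $W$ fixes $u_0$, that $-I$ acts on $\dual{T}(\cpx)$ as the total sign change $\nu$, and that $W$-orbits inside a similitude-fixed slice of $\dual{T}(\cpx)$ are separated by characteristic polynomials — are routine bookkeeping.
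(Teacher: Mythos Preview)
Your argument is correct and genuinely different from the paper's. The paper computes $\pi_0(Z_{\PGSp_{2g}(\cpx)}(s))$ via Reeder's alcove description \eqref{MR}: it places $v$ in the closure of the fundamental alcove for the affine Weyl group of $\dual{G}\ad$, identifies the nontrivial element of $X^*(T\der)/\langle R\der\rangle\iso\integ/2\integ$ with the affine symmetry $\varpi_j\mapsto\varpi_{g-j}$, and then unwinds the fixed-point condition coordinate by coordinate until it becomes the evenness of $\charpoly_{r_\lambda(t)}$. You instead stay with the exact sequence $1\to W_s^\circ\to W_s\to\pi_0\to 1$, use Steinberg's connectedness theorem for $\GSp_{2g}$ (legitimate because $\dual{G}\der=\Sp_{2g}$ is simply connected) to identify $W_s^\circ$ with the honest stabilizer $W_t$, and reduce everything to the single question of whether $-t$ lies in the $W$-orbit of $t$; the evenness criterion then drops out immediately from the description of $W$-orbits by characteristic polynomials. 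Your route is shorter and more conceptual, and in fact never uses the hypothesis that the eigenvalues have modulus~$1$ (the paper needs that hypothesis to place $v$ in the real alcove). The paper's approach, on the other hand, embeds the calculation in a general framework that would adapt uniformly to other groups and makes the bound $\abs{\pi_0}\le 2$ visible from the outset via $\pi_1(\dual{G}\ad)$. The one point worth flagging for the reader is that your recovery of the multiset of unordered pairs $\st{u_j,u_0/u_j}$ from the eigenvalue multiset requires a moment's thought when some $u_j$ satisfies $u_j^2=u_0$, but the count still works because such eigenvalues automatically appear with even multiplicity.
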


\begin{proof}
Using the notation in the proof of Proposition~\ref{prop:piX}, 
pick $x\in X^*(T)\otimes \real$ with $\exp(x) =t$;
of course, $x$ is not uniquely determined by $t$, as the kernel of $\exp : X^*(T)\otimes \real \to  \dual{T}(\cpx)$ is the weight lattice for $T$, which, in this case, is the character lattice $X^*(T)$ itself; see Table~\ref{table:rootdata}.
Let $v \in X^*(T\der) \otimes\real$ be the image of $x$ under the map $X^*(T) \otimes\real \to X^*(T\der) \otimes\real$ induced from $X^*(T) \to X^*(T\der)$; see Table~\ref{table:rootdata}. 
Note that $\exp(v) = s$ where now $\exp$ refers to the map $\exp: X^*(T\der) \to \dual{T}(\cpx)$ defined as above.
Using this map we may identify $\Lie \dual{T}\ad(\cpx)$ with $X^*(T\der) \otimes \cpx$; under this identification, the Lie algebra of the compact subtorus of $\dual{T}\ad(\cpx)$ may be identified with $X^*(T\der) \otimes \real$, henceforth denoted by $V$.

Let $R\der$ be the root system for $G\der$ and let  $\langle R\der\rangle$ be the lattice generated by $R\der$. By \cite{Reeder}*{\S2.2}, 
\begin{equation}\label{MR}
\pi_0(Z_{\dual{G}\ad(\cpx)}(s)) \iso \{ \gamma \in X^*(T\der)/\langle R\der \rangle \tq \gamma(v) =v \},
\end{equation}
for a canonical action of $X^*(T\der)/\langle R\der \rangle$ on $V$,
which we will use to calculate $\pi_0(Z_{\dual{G}\ad(\cpx)}(s))$.
Even before describing this action, however, we remark that \eqref{MR}, together with the calculation of $X^*(T\der)/\langle R\der \rangle$ in Table~\ref{table:rootdata}, already gives us good information about $\pi_0(Z_{\dual{G}\ad(\cpx)}(s))$: this component group is trivial or $\integ/2\integ$, and in particular, abelian.

In order to describe the action of $X^*(T\der)/\langle R\der \rangle$ on $V$ 
and calculate the right hand side of \eqref{MR},
 we must introduce yet more notation.
Adapting \cite{Bourbaki}*{VI,\S 2}, let $W_\text{aff} \ceq \langle R\der \rangle \rtimes W$ be the affine Weyl group for $\dual{G}\ad$
 and let $W_\text{ext} \ceq X^*(T\der) \rtimes W$ be the extended affine Weyl group for $\dual{G}\ad$. 
(Here we use the coincidence of the weight lattice for $G\der$ with the character lattice for $G\der$.)
Then $W_\text{ext}$ as a semidirect product of the Coxeter group $W_\text{aff}$ by $X^*(T\der)/\langle R\der \rangle$.
\begin{equation}\label{ext}
1 \to  W_\text{aff} \to W_\text{ext} \to X^*(T\der)/\langle R\der \rangle \to 1
\end{equation}
The quotient $X^*(T\der)/\langle R\der \rangle$ coincides with the fundamental group $\pi_1(\dual{G}\ad)$ of $\dual{G}\ad$ (see \cite{Steinberg:Yale}*{p. 45} for a table of these finite abelian groups by type).  By \cite{Bourbaki}*{VI,\S 2.4, Cor.}, the minuscule coweights for $\dual{G}\ad$ determine a set of representatives for $X^*(T\der)/\langle R\der \rangle$.
The basis in Table~\ref{table:rootdata} for the root system $\dual{R}\ad$  determines the alcove
\[
C \ceq \{ v\in V \tq \langle \dual{\alpha}_i, v\rangle > 0 , 0\leq i \leq n\}
\]
in $V$, where $\dual{\alpha}_0$ is the affine root for which $1-\dual{\alpha}_0$ is the longest root with respect to the given basis for $\dual{R}\ad$;
 see \cite{Bourbaki}*{VI, \S 2.3}. The closure ${\bar C}$ of  $C$ is a fundamental domain for the action of $W_\text{aff}$ on $V$.
The affine Weyl group $W_\text{aff}$ acts freely and transitively on the set of alcoves in $V$. The extended affine Weyl group $W_\text{ext}$ acts transitively on the set of alcoves, but generally not freely.
Since minuscule coweights for $\dual{G}\ad$ determine a set of representatives for $X^*(T\der)/\langle R\der \rangle$, and since each such coweight may be identified with a vertex of ${\bar C}$ (not all vertices arise this way), we have
\begin{equation}\label{Omega}
\{ w \in W_\text{ext} \tq w(C) = C \} \iso X^*(T\der)/\langle R\der \rangle,
\end{equation}
canonically.
This describes the action of $X^*(T\der)/\langle R\der \rangle$ on $V$.

The calculation of $\{ \gamma \in X^*(T\der)/\langle R\der \rangle \tq \gamma(v) =v \}$ now follows easily.
Let $\{ \varpi_1, \ldots , \varpi_g\}$ be the basis of weights for $X^*(T\der)$ dual to the basis $\dual{R}\ad = \{ \dual{\alpha}_1,\ldots, \dual{\alpha}_g\}$ for $X^*(\dual{T}\ad) = X_*(T\der)$; set $\varpi_0 = 0$. 
The closure ${\bar C}$ of the alcove $C$ is the convex hull of the vertices $\{ v_0, v_1, \ldots , v_g\}$ defined by $v_j = \frac{1}{b_j}\varpi_j$, where $b_0 = 1$ and the other integers $b_j$ are determined by the longest root in $\dual{R}\ad$ according to $\dual{\alpha} = \sum_{j=1}^g b_j \dual{\alpha}_j$.
In the case at hand the longest root is
$\dual{\alpha} = 2 \dual{\alpha}_1 + 2  \dual{\alpha}_2 + \cdots + 2 \dual{\alpha}_{g-1} + \dual{\alpha}_g$ so
$b_1 = 2$, $\ldots$ , $b_{g-1}=2$, $b_g = 1$.
Note that exactly two vertices in $\{ v_0, v_1, \ldots, v_g\}$ are hyperspecial: $v_0$ and $v_g$.
Since $W_\text{ext}$ acts transitively on the alcoves in $V$ and since $\exp : V\to \dual{T}\ad(\cpx)$ is $W_\text{ext}$-invariant, we may now suppose $v\in {\bar C}$.
Express $v\in V$ in the basis of weights for $X^*(T\der)$:
\begin{equation}\label{v}
v = \sum_{j=1}^g x_j \varpi_j;
\end{equation}
note that the coefficients in this expansion are precisely the root values $x_j = \dual{\alpha}_j(v)$. Then $v\in {\bar C}$ exactly means $x_j \geq 0$. Set $b_0=1$ and define $x_0\geq 0$ so that $\sum_{j=0} b_j x_j =1$;
in other words
\[
v = \sum_{j=0}^g x_j \varpi_j, \qquad x_0 + 2 x_1 + \cdots + 2x_{g-1} + x_g =1.
\]
Under the isomorphism \eqref{Omega}, the non-trivial element of $X^*(T\der)/\langle R\der \rangle$ corresponds to $\rho \in W_\text{ext}$ defined by
$v_j \mapsto v_{g-j}$ for $j=0, \ldots, g$. In terms of the fundamental weights $\{ \varpi_0, \varpi_1, \ldots , \varpi_g\}$ this affine transformation is defined by $\varpi_j \mapsto \varpi_{g-j}$ for $j=0, \ldots , g$. Thus, $\{ \gamma \in X^*(T\der)/\langle R\der \rangle \tq \gamma(v) =v \}$ is non-trivial if and only if $\rho(v) =v$, which is to say,
\begin{equation}\label{x}
x_j  \geq  0, \quad j = 1, \ldots , g
\end{equation}
and
\begin{align*}
x_1 + \cdots + x_{g-1} + x_g & = \frac{1}{2}\\
x_j &= x_{g-j},\quad j =1, \ldots, g-1.
\end{align*}

It only remains to translate the conditions above into conditions on the eigenvalues of $t \in G(\cpx)$. 
To do that we pass from root values $x_j= \langle \dual{\alpha}_j, x\rangle$ to character values $y_j \ceq \langle f_j, x\rangle$.
Again using Table~\ref{table:rootdata} we see that the conditions above
are equivalent to
\begin{equation}\label{y1}
y_1 \geq y_2 \geq \cdots \geq y_g \geq \frac{1}{2}y_0
\end{equation}
and
\begin{align*}
y_1 +  y_g & = \frac{1}{2} + y_0\\
y_j - y_{j+1} &= y_{g-j} - y_{g-j+1},\quad  j =1, \ldots, g-1.
\end{align*}
When combined, these last two conditions take a very simple form:
\begin{equation}\label{y2}
y_0 - y_j = \frac{1}{2} + y_{g-j+1},\quad  j =1, \ldots, g-1.
\end{equation}

Finally, we calculate the characteristic polynomial of $r_\lambda(t)$.
Observe that $r_\lambda(t)= r_\lambda(\exp(x)) = \exp(dr_\lambda(x))$,
where $dr_\lambda :X^*(T) \to X^*(\Gm^{2g})$ is given by \eqref{dr}.
Set $t_j = e^{2\pi i y_j}$ for $j=0, \ldots , g$. Then constraint \eqref{y2} is equivalent to
\begin{equation}\label{t}
t_0 t_j^{-1} = -t_{g-j+1},\quad  j =1, \ldots, g-1.
\end{equation}
The characteristic polynomial of $r_\lambda(t)$ is
\begin{equation}\label{c}
\charpoly_{r_\lambda(t)}(T) \ceq \prod_{j=1}^g (T- t_j)\prod_{j=1}^g (T- t_0t_j^{-1}).
\end{equation}
When combined with \eqref{t}, it is clear that $\charpoly_{r_\lambda(t)}(T)$ is even:
\begin{eqnarray*}
\charpoly_{r_\lambda(t)}(T)
&=& \prod_{j=1}^g (T- t_j)\prod_{j=1}^g (T+ t_{g-j+1}),\quad \eqref{t}\\
&=& \prod_{j=1}^g (T- t_j)\prod_{i=1}^g (T+ t_{i}), \quad j \mapsto g-j+1\\
&=& \prod_{j=1}^g (T^2- t_j^2).\\
\end{eqnarray*}

We have now seen that if $\pi_0(Z_{\PGSp_{2g}(\cpx)}(s))$ is non-trivial, then $\charpoly_{r_\lambda(t)}(T)$ is even.
To see the converse, suppose $\charpoly_{r_\lambda(t)}(T)$ \eqref{c} is even.
Without loss of generality, we may assume the similitude factor $t_0$ is trivial.
Then, after relabelling if necessary, the \emph{symplectic} characteristic polynomial $\charpoly_{r_\lambda(t)}(T)$ is even
if and only if it takes the form $\charpoly_{r_\lambda(t)}(T) = \prod_{j=1}^{g} (T^2- r_j^2)$, with $r_j^{-1} = -r_{\sigma(j)}$
for some permutation $\sigma$ of  $\{ 1, \ldots , g\}$.
Since the roots are the eigenvalues of $t$, which are unitary by hypothesis, we can order them by angular components, as in \eqref{y1},
while replacing $\sigma$ with the permutation $j \mapsto g-j+1$,
thus bringing us back to \eqref{y2}.
This concludes the proof of Proposition~\ref{prop:Sp}.
 \end{proof}

\subsection{Restriction to the derived group}\label{sec:res}

In this section we show how to recognize when $X/K$ is even through a simple property of the admissible representation $\pi_{X,\lambda}$ of $G(K)$.

\begin{theorem}\label{thm:red}
Let $X/K$ be an abelian variety of dimension $g$ with good reduction
and let $\lambda$ be a polarization on $X$.
The restriction of $\pi_{X,\lambda}$ from $\GSpin_{2g+1}(K)$ to $\Spin_{2g+1}(K)$ is reducible if and only if $X$ is even.
\end{theorem}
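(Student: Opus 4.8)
The plan is to reduce the statement to the R-group computation already assembled in Sections~\ref{sec:R} and~\ref{sec:LpiX}. By the discussion in Section~\ref{sec:R}, the restriction $\pi_{X,\lambda}\rest{G\der(K)}$ decomposes according to the commuting algebra $\cpx[R(\sigma)]$, where $\sigma$ is the restriction to $T\der(K)$ of the unramified character $\chi$ defining $\pi_{X,\lambda}$; hence the restriction is reducible if and only if $R(\sigma)$ is nontrivial. So the first step is to record that, via the canonical isomorphism $R(\sigma) \iso \pi_0(Z_{\dual{G}\ad(\cpx)}(s))$ established at the end of Section~\ref{sec:R}, reducibility is equivalent to $\pi_0(Z_{\PGSp_{2g}(\cpx)}(s))$ being nontrivial, where $s\in\PGSp_{2g}(\cpx)$ is the semisimple element attached to $\sigma$.

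Next I would identify the relevant semisimple elements explicitly. From the proof of Proposition~\ref{prop:piX}, the Langlands parameter for $\pi_{X,\lambda}$ sends $\Fr_q$ to $\rho_{X,\lambda}(\Fr_q) = \sqrt q\,\exp(\theta)\in\dual{G}(\cpx)=\GSp_{2g}(\cpx)$, and we may equally use its unitary twist $t\ceq\exp(\theta)\in\GSp_{2g}(\cpx)$, all of whose eigenvalues have complex modulus $1$ (this is where the unitarity normalization $\abs{\ }_K^{-1/2}\otimes\pi_{X,\lambda}$ earns its keep). Its image $s$ in $\PGSp_{2g}(\cpx)$ is precisely the semisimple element corresponding to $\sigma$. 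Now Proposition~\ref{prop:Sp} applies verbatim: $\pi_0(Z_{\PGSp_{2g}(\cpx)}(s))\iso\integ/2\integ$ if and only if the characteristic polynomial of $r_\lambda(t)$ is even, and is trivial otherwise.

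The last step is to connect the evenness of $\charpoly_{r_\lambda(t)}(T)$ to the evenness of $X$. By the computation at the end of the proof of Proposition~\ref{prop:piX}, the eigenvalues of $r_\lambda(t)$ are, up to the common scaling by $\sqrt q$, exactly the roots $\tau_1,\ldots,\tau_{2g}$ of $\charpoly_{X_0/\ff_q}(T)$; concretely, $\charpoly_{r_\lambda(\sqrt q\,t)}(T) = \charpoly_{X_0/\ff_q}(T)$, and rescaling $T\mapsto \sqrt q\,T$ does not affect the parity of a polynomial. Therefore $\charpoly_{r_\lambda(t)}(T)$ is even if and only if $\charpoly_{X_0/\ff_q}(T)$ is even, i.e.\ if and only if $X_0/\ff_q$ is even in the sense of Section~\ref{subsec:even}. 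By Proposition~\ref{lemevenlocal}, $X_0/\ff_q$ is even if and only if $X/K$ is even. Chaining the three equivalences—reducibility $\iff$ $R(\sigma)\neq 1$ $\iff$ $\charpoly_{r_\lambda(t)}(T)$ even $\iff$ $X/K$ even—completes the proof.

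The only point requiring care, and the place I would expect to spend the most effort, is the bookkeeping in the middle step: checking that the element $s$ produced from $\sigma$ under the local Langlands correspondence for tori and the $G\der$-versus-$G$ restriction really is the image of $t=\exp(\theta)$, with the bases and the map $dr_\lambda$ of \eqref{dr} lined up consistently with Table~\ref{table:rootdata}. Once that compatibility is in hand, both Proposition~\ref{prop:Sp} and the Euler-factor computation in Proposition~\ref{prop:piX} are black boxes and the theorem follows formally.
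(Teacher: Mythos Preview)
Your proposal is correct and follows essentially the same route as the paper: set $t=\exp(\theta)$, let $s$ be its image in $\dual{G}\ad(\cpx)$, identify the R-group for $\pi_{X,\lambda}\rest{G\der(K)}$ with $\pi_0(Z_{\dual{G}\ad(\cpx)}(s))$ via Section~\ref{sec:R}, invoke Proposition~\ref{prop:Sp} to translate nontriviality into evenness of $\charpoly_{r_\lambda(t)}(T)$, and finish with Proposition~\ref{lemevenlocal}. If anything you are slightly more explicit than the paper about the $\sqrt q$-rescaling not affecting parity and about the bookkeeping compatibility of $s$ with $\sigma$; the paper treats these as evident.
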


\begin{proof}
With reference to notation from the proof of Proposition~\ref{prop:piX}, set $t = \exp(\theta)$ and let $s\in \dual{T}\ad$ be the image of $t$ under $\dual{T}\to \dual{T}\ad$.
The restriction of $\pi_{X,\lambda}$ from $G(K)$ to $G\der(K)$ decomposes into irreducible representations indexed by the component group $\pi_0(Z_{\dual{G}\ad(\cpx)}(s))$.
Indeed, the irreducible representations of $G\der(K)$ that arise in this way
are precisely the irreducible representations appearing in
$\Ind_{B\der(K)}^{G\der(K)}\chi\der$,
where $B\der(K)$ is a Borel containing $T\der(K)$ and $\chi\der$ is the
unramified quasicharacter of $T\der(K)$ corresponding to $t\ad \in \dual{T}\ad(\cpx)$.
The R-group for this unramified principal series representation is
$
\pi_0(Z_{\dual{G}\ad}(s)).
$
By Proposition~\ref{prop:Sp}, this group is either trivial or a group of order $2$, so either $\pi_{X,\lambda} \rest{G\der(K)}$ is irreducible or contains two irreducible admissible representations; also by Proposition~\ref{prop:Sp}, the latter case occurs if and only if the characteristic polynomial $\charpoly_{X_0/{\ff_q}}(T)$ is even,
in which case $X/K$ itself is even (Proposition \ref{lemevenlocal}).
\end{proof}

\subsection{L-packet interpretation}

In this section we show how to recognize even abelian varieties
over local fields through associated L-packets.

As discussed in Section~\ref{sec:polar}, every polarized abelian variety $(X,\lambda)$ over $K$ determines an $\ell$-adic Galois representation $\rho_{X,\lambda,\ell} : \Gal({\bar K}/K) \to \GSp(V_\ell X, \ang{\cdot,\cdot}_\lambda)$.
Let $W'_K$ be the Weil-Deligne group for $K$ \cite{Tate:NTB}*{\S 4.1}.
Let $\phi_{X,\lambda,\ell} : W'_K \to \Gal({\bar K}/K) \to \GSp(V_\ell X, \ang{\cdot,\cdot}_\lambda)$
 be the Weil-Deligne homomorphism obtained by applying
\cite{Deligne:L}*{Thm 8.2} to $\rho_{X,\lambda,\ell}$.
 We note that
$\Lgroup{G} = \dual{G}(\cpx)\rtimes W_K = \GSp_{2g}(\cpx)\times W_K$.
Let
\[
\phi_{X,\lambda} : W'_K \to \Gal({\bar K}/K) \to \Lgroup{G}
\]
be the admissible homomorphism determined by $\phi_{X,\lambda,\ell}$ and the basis for $V_\ell X \otimes_{\rat_\ell} \cpx$ identified in the proof of Proposition~\ref{prop:piX}.
The equivalence class of the admissible homomorphism $\phi_{X,\lambda}$ is the Langlands parameter for the polarized abelian variety $(X,\lambda)$ over $K$. We remark that this recipe is valid for all polarized abelian varieties over $K$, not just those of good reduction.
But here we are interested in the case when $X$ has good reduction, in which case $\rho_{X,\lambda}$ is unramified in the strongest sense: the local monodromy operator for the Langlands parameter $\phi_{X,\lambda}$ is trivial ($\phi_{X,\lambda}$ factors through $W'_K \to W_K$) and $\phi_{X,\lambda}$ is trivial on the inertia subgroup $I_K$ of $W_K$.

Although the full local Langlands correspondence for $G= \GSpin_{2g+1}$ is not yet known,
the part which pertains to unramified principal series representations is,
allowing us to consider the L-packet $\Pi_{X,\lambda}$
for the Langlands parameter $\phi_{X,\lambda}$.
Indeed, we have seen that this L-packet contains the equivalence
class of $\pi_{X,\lambda}$, only.

Theorem~\ref{thm:red} shows that we can detect when $X$ is $K$-isogenous to its twist over the quadratic unramified extension of $K$ by restricting $\pi_{X,\lambda}$ from $G(K)$ to $G\der(K)$.
On the Langlands parameter side, this restriction corresponds to post-composing $\phi_{X,\lambda}$ with $\Lgroup{G} \to \Lgroup{G}\ad$.
Let $\phi_{X,\lambda}^\text{der}$ be the Langlands parameter for $G\der/K$ defined by the diagram below and let $\Pi_{X,\lambda}^\text{der}$ be the corresponding L-packet.
\begin{equation}\label{phiad}
\xymatrix{
W'_K \ar[rr]^{\phi_{X,\lambda}}  \ar@{.>}[dr]_{\phi_{X,\lambda}^\text{der}} && \Lgroup{G}  \ar@{->>}[dl] \\
& \Lgroup{G}\ad & \\
}
\end{equation}

\begin{corollary}\label{cor:L}
Let $X/K$ be an abelian variety of dimension $g$ with good reduction
and let $\lambda$ be a polarization on $X$.
The L-packet $\Pi_{X,\lambda}^\text{der}$ for $\Spin_{2g+1}(K)$ has cardinality $2$ exactly when $X$ is even; otherwise, it has cardinality $1$.
\end{corollary}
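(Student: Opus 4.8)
The plan is to identify the cardinality of $\Pi_{X,\lambda}^\text{der}$ with the number of irreducible constituents of the unramified principal series $\pi_{X,\lambda}\rest{G\der(K)}$, and then invoke Theorem~\ref{thm:red}.

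First I would describe $\phi_{X,\lambda}^\text{der}$ concretely. Since $\phi_{X,\lambda}$ is unramified with $\phi_{X,\lambda}(\Fr_q) = \sqrt q\,\exp(\theta)\times \Fr_q$ and the scalar $\sqrt q$ lies in $Z(\dual G)(\cpx) = \Gm$, which maps to $Z(\dual G\ad)(\cpx) = Z(\PGSp_{2g})(\cpx) = 1$, post-composition with the projection $\Lgroup G \to \Lgroup G\ad$ in \eqref{phiad} yields the unramified parameter $\phi_{X,\lambda}^\text{der}(\Fr_q) = s\times \Fr_q$, where $s\in \dual T\ad(\cpx)$ is the image of $t \ceq \exp(\theta)$. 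In particular $\phi_{X,\lambda}^\text{der}$ has bounded image — it is tempered — and is the parameter attached to the unitary unramified quasicharacter $\chi\der$ of $T\der(K)$ that appears in the proof of Theorem~\ref{thm:red}. For an unramified parameter of this shape, the relevant (and unconditionally available) part of the local Langlands correspondence for the quasi-split group $G\der = \Spin_{2g+1}$, with dual group $\dual G\ad = \PGSp_{2g}$, says that $\Pi_{X,\lambda}^\text{der}$ is exactly the set of equivalence classes of irreducible constituents of $\Ind_{B\der(K)}^{G\der(K)}\chi\der$, and that these are parametrized by $\Irrep$ of $S_{\phi_{X,\lambda}^\text{der}} \ceq \pi_0(Z_{\dual G\ad(\cpx)}(s))$ (there being nothing to quotient by, as $Z(\dual G\ad)$ is trivial). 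By the discussion of R-groups in Section~\ref{sec:R}, this component group is the Knapp--Stein R-group $R(\chi\der)$, so $\#\Pi_{X,\lambda}^\text{der} = \#R(\chi\der)$; and since Proposition~\ref{prop:Sp} exhibits $\pi_0(Z_{\dual G\ad(\cpx)}(s))$ as an abelian group of order $1$ or $2$, its number of irreducible representations equals its order.

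Next I would note, exactly as in the proof of Theorem~\ref{thm:red}, that the irreducible constituents of $\pi_{X,\lambda}\rest{G\der(K)}$ are precisely those of $\Ind_{B\der(K)}^{G\der(K)}\chi\der$ — the character $\abs{\ }_K^{-1/2}$ relating $\pi_{X,\lambda}$ to its unitary normalization being trivial on $G\der(K)$, and $\chi\der$ being the restriction of the inducing character of $T(K)$. Hence $\#\Pi_{X,\lambda}^\text{der}$ equals the number of irreducible constituents of $\pi_{X,\lambda}\rest{G\der(K)}$, which by Theorem~\ref{thm:red} is $2$ when $X$ is even and $1$ otherwise. (Equivalently, one can read the conclusion off directly from Proposition~\ref{prop:Sp} applied to $t = \exp(\theta)$, whose eigenvalues $\tau_j/\sqrt q$ all have complex modulus $1$, together with the observation that the characteristic polynomial of $r_\lambda(t)$ is even if and only if $\charpoly_{X_0/{\ff_q}}(T)$ is even — rescaling a multiset of complex numbers by $\sqrt q$ does not affect its stability under multiplication by $-1$ — if and only if $X$ is even, by Proposition~\ref{lemevenlocal}.)

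The main obstacle is bookkeeping rather than mathematics: one must check that passing from $\Lgroup G$ to $\Lgroup G\ad$ removes the $\sqrt q$-twist, so that $\phi_{X,\lambda}^\text{der}$ is tempered and we are squarely inside the portion of the local Langlands correspondence for $\Spin_{2g+1}$ that does not depend on unproven conjectures; and one must reconcile the two descriptions of $\#\Pi_{X,\lambda}^\text{der}$, via $\Irrep(S_{\phi_{X,\lambda}^\text{der}})$ on the one hand and via the R-group of an unramified principal series on the other. With those points settled, the corollary is immediate from Theorem~\ref{thm:red} and Proposition~\ref{prop:Sp}.
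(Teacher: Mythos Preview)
Your proposal is correct and follows essentially the same route as the paper: identify $\mathcal{S}_{\phi_{X,\lambda}^\text{der}}$ with $\pi_0(Z_{\dual{G}\ad(\cpx)}(s))$ (using that $Z(\dual{G}\ad)$ is trivial and the parameter is unramified), recognize this as the Knapp--Stein R-group via Section~\ref{sec:R}, and then invoke Theorem~\ref{thm:red} and Proposition~\ref{prop:Sp}. Your version is slightly more explicit in two places---noting that the scalar $\sqrt{q}$ dies under $\dual{G}\to\dual{G}\ad$, and that an abelian group of order $1$ or $2$ has that many irreducible representations---but these are clarifications rather than a different argument.
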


\begin{proof}
This follows directly from the fact that the R-group for any
representation in the restriction of $\pi_{X,\lambda}$ to $G\der(K)$
coincides with the Langlands component group attached to $\phi_{X,\lambda}^\text{der}$.
(See \cite{Ban-Goldberg:R-groups} for more instances of this coincidence.)
Namely, equivalence classes of representations that live in $\Pi_{X,\lambda}^\text{der}$
are parameterized by irreducible representations of the group
\[
\mathcal{S}_{\phi_{X,\lambda}^\text{der}} \ceq Z_{\dual{G}\ad}(\phi_{X,\lambda}^\text{der})/ Z_{\dual{G}\ad}(\phi_{X,\lambda}^\text{der})^0\ (Z\dual{G}\ad)^{W_K}.
\]
Since $G\der$ is $K$-split, the action of $W_K$ on $\dual{G}\ad$ is trivial, and
since $\phi_{X,\lambda}^\text{der}$ is unramified, $Z_{\dual{G}\ad}(\phi_{X,\lambda}^\text{der}) = Z_{\dual{G}\ad}(t\ad)$, where $t\ad = \phi_{X,\lambda}^\text{der}(\Fr_q)$;
thus,
\[
\mathcal{S}_{\phi_{X,\lambda}^\text{der}} = \pi_0(Z_{\dual{G}\ad}(t\ad)),
\]
which is precisely the R-group for $\pi_{X,\lambda}\vert_{G\der(K)}$, calculated in Theorem~\ref{thm:red}.
\end{proof}

\section{Concluding remarks}

It is natural to ask how the story above extends to include abelian varieties $X$
over local fields which do not necessarily have good reduction, keeping track of the
relation between the $\ell$-adic Tate module $T_\ell X$ and the associated
Weil-Deligne representations, generalizing \cite{Rohrlich},
and the corresponding L-packets.
For this it would be helpful to know the full local Langlands correspondence
for $\GSpin_{2g+1}(K)$, not just the part which pertains to unramified
principal series representations.
 Since the full local Langlands correspondence for $\GSpin_{2g+1}(K)$
is almost certainly within reach by an adaptation of Arthur's work \cite{Arthur:endoscopic_classification} on the endoscopic classification of representations, following \cite{Arthur:GSp4},
we have, for the moment, postponed looking into such questions until Arthur's ideas have been adapted to general spin groups.

At the heart of this note we have used a very simple instance of what is, according to a conjecture of Arthur \cite{Arthur:unipotent-conjectures}, a very general phenomenon: the coincidence of Knapp-Stein R-groups with the component groups attached to Langlands parameters, sometimes known as Arthur R-groups, as in \cite{Ban-Zhang:R-groups}. While most known cases of this coincidence appear or are summarized in \cite{Ban-Goldberg:R-groups}, as remarked at the end of the introduction to that paper, there is work remaining for general spin groups.

When some of these missing pieces are available, we intend to use the local results in this note to explore the connection between abelian varieties over number fields and global L-packets of automorphic representations of spin groups and general spin groups, generalizing the results of \cite{Anand-Prasad:SL2}*{\S 2}.

\begin{bibdiv}
\begin{biblist}

\bib{Anand-Prasad:SL2}{article}{
   author={Anandavardhanan, U. K.},
   author={Prasad, Dipendra},
   title={On the ${\rm SL}(2)$ period integral},
   journal={Amer. J. Math.},
   volume={128},
   date={2006},
   number={6},
   pages={1429--1453},
}

\bib{Arthur:endoscopic_classification}{book}{
   author={Arthur, James},
   title={The endoscopic classification of representations},
   series={American Mathematical Society Colloquium Publications},
   volume={61},
   note={Orthogonal and symplectic groups},
   publisher={American Mathematical Society},
   place={Providence, RI},
   date={2013},
   pages={xviii+590},
}

\bib{Arthur:GSp4}{article}{
   author={Arthur, James},
   title={Automorphic representations of ${\rm GSp(4)}$},
   conference={
      title={Contributions to automorphic forms, geometry, and number
      theory},
   },
   book={
      publisher={Johns Hopkins Univ. Press},
      place={Baltimore, MD},
   },
   date={2004},
   pages={65--81},
}

\bib{Arthur:unipotent-conjectures}{article}{
   author={Arthur, James},
   title={Unipotent automorphic representations: conjectures},
   note={Orbites unipotentes et repr\'esentations, II},
   journal={Ast\'erisque},
   number={171-172},
   date={1989},
   pages={13--71},
}

\bib{Ban-Goldberg:R-groups}{article}{
   author={Ban, Dubravka},
   author={Goldberg, David},
   title={$R$-groups and parameters},
   journal={Pacific J. Math.},
   volume={255},
   date={2012},
   number={2},
   pages={281--303},
   issn={0030-8730},
}

\bib{Ban-Zhang:R-groups}{article}{
   author={Ban, Dubravka},
   author={Zhang, Yuanli},
   title={Arthur $R$-groups, classical $R$-groups, and Aubert involutions
   for ${\rm SO}(2n+1)$},
   journal={Compos. Math.},
   volume={141},
   date={2005},
   number={2},
   pages={323--343},
}

\bib{Bourbaki}{book}{
   author={Bourbaki, N.},
   title={\'El\'ements de math\'ematique. Fasc. XXXIV. Groupes et alg\`ebres
   de Lie. Chapitre IV: Groupes de Coxeter et syst\`emes de Tits. Chapitre
   V: Groupes engendr\'es par des r\'eflexions. Chapitre VI: syst\`emes de
   racines},
   series={Actualit\'es Scientifiques et Industrielles, No. 1337},
   publisher={Hermann},
   place={Paris},
   date={1968},
}

\bib{Deligne:L}{article}{
   author={Deligne, P.},
   title={Les constantes des \'equations fonctionnelles des fonctions $L$},
   conference={
      title={Modular functions of one variable, II (Proc. Internat. Summer
      School, Univ. Antwerp, Antwerp, 1972)},
   },
   book={
      publisher={Springer},
      place={Berlin},
   },
   date={1973},
   pages={501--597. Lecture Notes in Math., Vol. 349},
}

\bib{Humphreys}{book}{
   author={Humphreys, James E.},
   title={Conjugacy classes in semisimple algebraic groups},
   series={Mathematical Surveys and Monographs},
   volume={43},
   publisher={American Mathematical Society},
   place={Providence, RI},
   date={1995},
}

\bib{Keys:decomposition}{article}{
   author={Keys, Charles David},
   title={On the decomposition of reducible principal series representations
   of $p$-adic Chevalley groups},
   journal={Pacific J. Math.},
   volume={101},
   date={1982},
   number={2},
   pages={351--388},
}

\bib{Oort:2005survey}{article}{
    AUTHOR = {Oort, Frans},
     TITLE = {Abelian varieties over finite fields},
book={
 TITLE = {Higher-dimensional geometry over finite fields},
EDITOR = {Kaledin, Dmitry and Tschinkel, Yuri},
    SERIES = {NATO Sci. Peace Secur. Ser. D Inf. Commun. Secur.},
    VOLUME = {16},
 PUBLISHER = {IOS},
   ADDRESS = {Amsterdam},
},
      YEAR = {2008},
     PAGES = {123--188},
}

\bib{Reeder}{article}{
   author={Reeder, Mark},
   title={Torsion automorphisms of simple Lie algebras},
   journal={Enseign. Math. (2)},
   volume={56},
   date={2010},
   number={1-2},
   pages={3--47},
}

\bib{Rohrlich}{article}{
   author={Rohrlich, David E.},
   title={Elliptic curves and the Weil-Deligne group},
   conference={
      title={Elliptic curves and related topics},
   },
   book={
      series={CRM Proc. Lecture Notes},
      volume={4},
      publisher={Amer. Math. Soc.},
      place={Providence, RI},
   },
   date={1994},
   pages={125--157},
}

\bib{Serre-Tate:good}{article}{
   author={Serre, Jean-Pierre},
   author={Tate, John},
   title={Good reduction of abelian varieties},
   journal={Ann. of Math. (2)},
   volume={88},
   date={1968},
   pages={492--517},
   issn={0003-486X},
}

\bib{Silberger}{book}{
   author={Silberger, Allan J.},
   title={Introduction to harmonic analysis on reductive $p$-adic groups},
   series={Mathematical Notes},
   volume={23},
   note={Based on lectures by Harish-Chandra at the Institute for Advanced
   Study, 1971--1973},
   publisher={Princeton University Press},
   place={Princeton, N.J.},
   date={1979},
   pages={iv+371},
   isbn={0-691-08246-4},
}

\bib{Silverberg}{article}{
    AUTHOR = {Silverberg, A.},
     TITLE = {Fields of definition for homomorphisms of abelian varieties},
   JOURNAL = {J. Pure Appl. Algebra},
    VOLUME = {77},
      YEAR = {1992},
    NUMBER = {3},
     PAGES = {253--262},
}

\bib{Steinberg}{book}{
    author={Steinberg, Robert},
     title={Conjugacy classes in algebraic groups},
    series={Lecture Notes in Mathematics},
    volume={366},
contribution={notes by Vinay V. Deodhar},
 publisher={Springer--Verlag},
     place={Berlin},
      date={1974},
}

\bib{Steinberg:Yale}{book}{
   author={Steinberg, Robert},
   title={Lectures on Chevalley groups},
   note={Notes prepared by John Faulkner and Robert Wilson},
   publisher={Yale University, New Haven, Conn.},
   date={1968},
}

\bib{Tate:endff}{article}{
    author = {Tate, John},
     title = {Endomorphisms of abelian varieties over finite fields},
   JOURNAL = {Invent. Math.},
    VOLUME = {2},
      YEAR = {1966},
     PAGES = {134--144},
}

\bib{Tate:NTB}{article}{
   author={Tate, J.},
   title={Number theoretic background},
   conference={
      title={Automorphic forms, representations and $L$-functions},
      address={Proc. Sympos. Pure Math., Oregon State Univ., Corvallis,
      Ore.},
      date={1977},
   },
   book={
      series={Proc. Sympos. Pure Math., XXXIII},
      publisher={Amer. Math. Soc., Providence, R.I.},
   },
   date={1979},
   pages={3--26},
}

\bib{Waterhouse}{article}{
   author={Waterhouse, William C.},
   title={Abelian varieties over finite fields},
   journal={Ann. Sci. \'Ecole Norm. Sup. (4)},
   volume={2},
   date={1969},
   pages={521--560},
}

\bib{Yu}{article}{
   author={Yu, Jiu-Kang},
   title={On the local Langlands correspondence for tori},
   conference={
      title={Ottawa lectures on admissible representations of reductive
      $p$-adic groups},
   },
   book={
      series={Fields Inst. Monogr.},
      volume={26},
      publisher={Amer. Math. Soc., Providence, RI},
   },
   date={2009},
   pages={177--183},
}

\end{biblist}
\end{bibdiv}

\end{document}